\newtheorem{thm}{Theorem}[section]
\newtheorem{lem}[thm]{Lemma}
\newtheorem{prop}[thm]{Proposition}
\newtheorem{cor}[thm]{Corollary}
\theoremstyle{definition}
\newtheorem{defn}[thm]{Definition}
\theoremstyle{remark}
\newtheorem{rem}[thm]{Remark}
\numberwithin{equation}{section}
\newcommand{\sca}{\mathcal{S}\mathcal{C}}
\newcommand{\fr}{\mathcal{F}}
\newcommand{\Id}{\operatorname{Id}}
\newcommand{\co}{\operatorname{co}}
\newcommand{\aff}{\operatorname{aff}}
\newcommand{\Tr}{\operatorname{Tr}}
\newcommand{\diag}{\operatorname{diag}}
\newcommand{\ip}[2]{\langle#1,#2\rangle}
\renewcommand{\emptyset}{\varnothing}
\newcommand{\R}{\ensuremath{\mathbb R}}    
\newcommand{\N}{\ensuremath{\mathbb N}}    
\newcommand{\calF}{\mathcal F}         
\newcommand{\calH}{\mathcal H}         
\newcommand{\calI}{\mathcal I}
\newcommand{\la}{\lambda}
\newcommand{\vphi}{\varphi}
\DeclareMathOperator*{\linspan}{span}
\renewcommand{\ker}{\operatorname{ker}}
\newcommand{\Sra}{\Rightarrow}
\newcommand{\Sla}{\Leftarrow}
\newcommand{\Llra}{\Longleftrightarrow}
\newcommand{\Slra}{\Leftrightarrow}
\newcommand{\<}{\langle}
\renewcommand{\>}{\rangle}
\begin{document}

\title{Scalable Frames and Convex Geometry}

\author{Gitta Kutyniok}
\address{Technische Universit\"at Berlin, Institut f\"ur Mathematik, Strasse des 17. Juni 136, 10623 Berlin, Germany}
\email{kutyniok@math.tu-berlin.de}
\author{Kasso A. Okoudjou}
\address{University of Maryland, Department of Mathematics, College Park, MD 20742 USA}
\email{kasso@math.umd.edu}
\author{ Friedrich Philipp}
\address{Technische Universit\"at Berlin, Institut f\"ur Mathematik, Strasse des 17. Juni 136, 10623 Berlin, Germany}
\email{philipp@math.tu-berlin.de}

%

\subjclass[2010]{Primary 42C15, 52B11; Secondary 15A03, 65F08}
\date{}


\keywords{Scalable frames, tight frames, preconditioning, Farkas's lemma}

\begin{abstract}
The recently introduced and characterized scalable frames can be considered as those frames which allow for
perfect preconditioning in the sense that the frame vectors can be rescaled to yield a tight frame. In this
paper we define $m$-scalability, a refinement of scalability based on the number of non-zero weights used in
the rescaling process, and study the connection between this notion and elements from convex geometry.
Finally, we provide results on the topology of scalable frames. In particular, we prove that the set of
scalable frames with ``small'' redundancy is nowhere dense in the set of frames.
\end{abstract}

\maketitle

\section{Introduction}\label{sec:intro}
Frame theory is nowadays a standard methodology in applied mathematics and engineering. The key advantage of
frames over orthonormal bases is the fact that frames are allowed to be redundant, yet provide stable
decompositions. This is a crucial fact, for instance, for applications which require robustness against
noise or erasures, or which require a sparse decomposition (cf. \cite{ck12}).

Tight frames provide optimal stability, since these systems satisfy the Parseval equality up to a constant.
Formulated in the language of numerical linear algebra, a tight frame is perfectly conditioned, since the
condition number of its analysis operator is one.
Thus, one key question is the following: Given a frame $\Phi = \{\varphi_k\}_{k=1}^{M}\subset\R^N$, $M\ge N$,
say, can the frame vectors $\varphi_k$ be modified so that the resulting system forms a tight frame? Again
in numerical linear algebra terms, this question can be regarded as a request for perfect preconditioning \cite{bb,c}.
Since a frame is typically designed to accommodate certain requirements of an application, this modification
process should be as careful as possible in order not to change the properties of the system too drastically.

One recently considered approach consists in multiplying each frame vector by a scalar/a weight. Notice that this process does  not even
disturb sparse decomposition properties at all, hence it might be considered `minimally invasive'. The formal
definition was given in \cite{kopt12} by the authors and E.K.\ Tuley (see also \cite{kop13a}). In that paper,
a frame, for which scalars exist so that  the scaled frame forms a tight frame, was coined {\em scalable frame}.
Moreover, in the infinite dimensional situation, various equivalent conditions for scalability were provided,
and in the finite dimensional situation, a very intuitive geometric characterization was proven. In fact, this
characterization showed that a frame is non-scalable, if the frame vectors do not spread `too much' in the space.
This seems to indicate that there exist relations to convex geometry.

Scalable frames were then also investigated in the papers \cite{cklmns12} and \cite{cc13}. In \cite{cklmns12}, the
authors analyzed the problem by making use of the properties of so-called diagram vectors \cite{hklw}, whereas
\cite{cc13} gives a detailed insight into the set of weights which can be used for scaling.

The contribution of the present paper is three-fold. First, we refine the definition of scalability by calling
a (scalable) frame $m$-scalable, if at most $m$ non-zero weights can be used for the scaling. Second, we establish
a link to convex geometry. More precisely, we prove that this refinement leads to a reformulation of the
scalability question in terms of the properties of certain polytopes associated to a nonlinear transformation of
the frame vectors. This nonlinear transformation is related but not equivalent to the diagram vectors used in the
results obtained in \cite{cklmns12}. Using this reformulation, we establish new characterizations of scalable
frames using convex geometry, namely convex polytopes. Third, we investigate the topological properties of the set
of scalable frames. In particular, we prove that in the set of frames in $\R^N$ with $M$ frame vectors the set of
scalable frames is nowhere dense if $M < N(N+1)/2$. We wish to mention, that the results stated and proved in this
paper were before announced in \cite{kop-spie}.

The paper is organized as follows. In Section \ref{sec:prelim}, we introduce the required notions with respect to
frames and their ($m$-)scalability as well as state some basic results. Section \ref{sec2} is devoted to establishing
the link to convex geometry and derive novel characterizations of scalable frames using this theory. Finally, in
Section \ref{sec3}, we study the topology of the set of scalable frames.


\section{Preliminaries}
\label{sec:prelim}

First of all, let us fix some notation. If $X$ is any set whose elements are indexed by $x_j$, $j\in J$, and $I\subset J$,
we define $X_I := \{x_i : i\in I\}$. Moreover, for the set $\{1,\ldots,n\}$, $n\in\N$, we write $[n]$.

A set $\Phi = \{\varphi_k\}_{k=1}^{M}\subset\R^N$, $M\ge N$ is called a {\em frame}, if there exist positive constants $A$
and $B$ such that
\begin{equation}\label{frameineq}
A\|x\|^2\,\le\,\sum_{k=1}^{M}|\ip{x}{\varphi_k}|^{2}\,\le\,B \|x\|^2
\end{equation}
holds for all $x\in\R^N$. Constants $A$ and $B$ as in \eqref{frameineq} are called \emph{frame bounds} of $\Phi$.
The frame $\Phi$ is called {\em tight} if $A = B$ is possible in \eqref{frameineq}. In this case we have
$A = \tfrac 1 N\sum_{=1}^{M}\|\varphi_k\|^{2}$. A tight frame with $A=B=1$ in \eqref{frameineq} is called
\emph{Parseval frame}.

We will sometimes identify a frame $\Phi = \{\varphi_{k}\}_{k=1}^{M}\subset\R^N$ with the $N\times M$ matrix
whose $k$th column is the vector $\varphi_k$. This matrix is called the {\em synthesis operator} of the frame.
The adjoint $\Phi^T$ of $\Phi$ is called the \emph{analysis operator}. Using the analysis operator, the relation
\eqref{frameineq} reads
$$
A\|x\|^2\,\le\,\|\Phi^Tx\|^2\,\le\,B \|x\|^2.
$$
Hence, a frame $\Phi$ is tight if and only if some multiple of $\Phi^T$ is an isometry. The set of frames
for $\R^N$ with $M$ elements will be denoted by $\fr(M,N)$. We say that a frame $\Phi\in\fr(M,N)$ is {\em degenerate} if
one of its frame vectors is the zero-vector. If $\mathcal{X}(M,N)$ is a set of frames in $\fr(M,N)$, we denote by
$\mathcal{X}^*(M,N)$ the set of the non-degenerate frames in $\mathcal{X}(M,N)$. For example, $\fr^*(M,N)$ is the set
of non-degenerate frames in $\fr(M,N)$. For more details on frames, we refer the reader to \cite{co03,ck12}

Let us recall the following definition from \cite[Definition 2.1]{kopt12}.

\begin{defn}\label{def0} 
A frame $\Phi = \{\varphi_k\}_{k=1}^M$ for $\R^N$ is called {\em scalable}, respectively, {\em strictly scalable}, if there exist nonnegative, respectively, positive,  scalars $c_1,\ldots,c_M\in\R$ such that $\{c_k\varphi_k\}_{k=1}^M$ is a tight frame for $\R^N$. The set of  scalable, respectively, strictly scalable,  frames in $\fr(M,N)$ is denoted by $\sca(M,N)$, respectively, $\sca_{+}(M,N)$.
\end{defn}

In order to gain a better understanding of the structure of scalable frames we refine the definition of scalability.

\begin{defn}\label{def1} 
Let $M,N,m\in\N$ be given such that $N\leq m \leq M$. A frame $\Phi=\{\varphi_k\}_{k=1}^{M} \in \fr(M,N)$  is said to be \emph{$m$-scalable}, respectively, \emph{strictly $m$-scalable}, if there exists a subset $I\subseteq [M]$, $\#I=m$, such that $\Phi_I$ is a scalable frame, respectively, a strictly scalable frame  for $\R^N$.  
We denote the set of $m$-scalable frames, respectively, strictly $m$-scalable frames  in $\fr(M,N)$ by   $\sca(M,N,m)$, respectively, $\sca_{+}(M,N,m).$ 
\end{defn}

It is easily seen that for $m\leq m'$ we have that $\sca(M,N, m)\subset \sca(M,N, m')$. Therefore,
$$
\sca(M,N) = \sca(M,N,M) = \bigcup_{m=N}^{M}\sca(M,N, m).
$$
In the sequel, if no confusion can arise, we often only write $\calF$, $\sca$, $\sca_{+}$, $\sca(m)$, and $\sca_{+}(m)$
instead of $\sca(M,N)$, $\sca_{+}(M,N)$, $\sca(M,N,m)$, and $\sca_{+}(M,N,m)$, respectively. The notations $\calF^*$,
$\sca^*$, $\sca_{+}^*$, $\sca(m)^*$, and $\sca_{+}(m)^*$ are to be read analogously.

Note that for a frame $\Phi\in\fr$ to be $m$-scalable it is necessary that $m\geq N$.
In addition, $\Phi\in\sca(M,N)$ holds if and only if $T(\Phi)\in\sca(M,N)$ holds for one (and hence for all) orthogonal
transformation(s) $T$ on $\R^N$; cf.\ \cite[Corollary 2.6]{kopt12}.

If $M\geq N$, we have $\Phi \in \sca(M,N,N)$ if and only if $\Phi$ contains an orthogonal basis of $\R^N$. This completely
characterizes the set $\sca(M,N,N)$ of $N$-scalable frames for $\R^N$ consisting of $M$ vectors. For frames with $M=N+1$
vectors in $\R^N$ we have the following result:

\begin{prop}\label{sframedplus1}
Let $N\geq 2$ and $\Phi=\{\varphi_k\}_{k=1}^{N+1}\in\fr^*$ with $\varphi_k\neq\pm\varphi_\ell$ for $k\neq\ell$. If
$\Phi\in\sca_+(N+1,N,N)$ then $\Phi\notin\sca_+(N+1,N)$.
\end{prop}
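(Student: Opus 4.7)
The plan is to exploit the rigidity of tight frames consisting of exactly $N$ vectors in $\R^N$. From $\Phi\in\sca_+(N+1,N,N)$, after relabelling I obtain positive scalars $c_1,\ldots,c_N$ with $\sum_{k=1}^N c_k^2\varphi_k\varphi_k^T=AI$ for some $A>0$. But a tight frame consisting of exactly $N$ vectors in $\R^N$ is necessarily an orthogonal basis with all vectors of equal norm, so $c_1\varphi_1,\ldots,c_N\varphi_N$, and therefore $\varphi_1,\ldots,\varphi_N$ themselves (since the $c_k$ are strictly positive), are pairwise orthogonal. Using the invariance of scalability under orthogonal transformations noted after Definition 2.2, I would then apply a rotation and assume $\varphi_k=\alpha_k e_k$ with $\alpha_k>0$ for $k=1,\ldots,N$, where $e_1,\ldots,e_N$ is the standard basis of $\R^N$.

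Next I would assume towards a contradiction that $\Phi\in\sca_+(N+1,N)$ and choose $d_1,\ldots,d_{N+1}>0$ such that
\[
S:=\sum_{k=1}^{N+1}d_k^2\varphi_k\varphi_k^T=A'I
\]
for some $A'>0$. Writing $\varphi_{N+1}=(\beta_1,\ldots,\beta_N)^T$, the first $N$ summands contribute the diagonal matrix $\operatorname{diag}(d_1^2\alpha_1^2,\ldots,d_N^2\alpha_N^2)$, so the off-diagonal entries of $S$ coincide with those of $d_{N+1}^2\varphi_{N+1}\varphi_{N+1}^T$. Tightness forces $d_{N+1}^2\beta_i\beta_j=0$ for all $i\neq j$.

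Since $d_{N+1}>0$ and $\varphi_{N+1}\neq 0$ by non-degeneracy, at most one, hence exactly one, coordinate $\beta_{i_0}$ is nonzero. Consequently $\varphi_{N+1}$ is a nonzero scalar multiple of $e_{i_0}$, hence of $\varphi_{i_0}$, contradicting the hypothesis that $\varphi_k\neq\pm\varphi_\ell$ for $k\neq\ell$ (read as a non-parallelism condition, which is what the off-diagonal calculation actually violates).

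The only step requiring care is the initial observation that $N$ strictly positive scalars turning $\varphi_1,\ldots,\varphi_N$ into a tight frame force the underlying vectors to be pairwise orthogonal; once this rigidity is in hand, the argument is a one-line computation on the off-diagonal entries of the frame operator, and I do not anticipate any substantive obstacle.
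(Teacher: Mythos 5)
Your proof is correct and takes essentially the same route as the paper: both use that a tight frame of exactly $N$ vectors in $\R^N$ is an orthogonal basis, reduce by an orthogonal transformation so that $\varphi_1,\ldots,\varphi_N$ lie along the coordinate axes, and then compare off-diagonal entries of the frame operator to force $\varphi_{N+1}$ onto a single coordinate axis. Your explicit caveat that the contradiction really uses a non-parallelism reading of $\varphi_k\neq\pm\varphi_\ell$ matches the paper's implicit treatment, since its ``without loss of generality'' step rescales all frame vectors to unit norm before invoking $\varphi_{N+1}\neq\pm e_k$.
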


\begin{proof}
If $\Phi\in \sca_{+}(N+1, N, N)$, then $\Phi$ must contain an orthogonal basis. By applying some orthogonal transformation
and rescaling the frame vectors, we can assume without loss of generality that $\{\varphi_k\}_{k=1}^N=\{e_k\}_{k=1}^N$ is
the standard orthonormal basis of $\R^N$, and that $\varphi_{N+1}\neq \pm e_k$ for each $k=1, 2, \hdots, N$, with
$\|\varphi_{N+1}\|=1$. Thus, $\Phi$ can be written as $\Phi = \begin{bmatrix}\Id_N & \varphi_{N+1}\end{bmatrix}$, where
$\Id_N$ is the $N\times N$ identity matrix.

Assume that there exists $\{\lambda_k\}_{k=1}^{N+1}\subset (0,\infty)$ such that $\widetilde{\Phi}=\{\lambda_k \varphi_k\}_{k=1}^{N+1}$
is a tight frame, i.e.\ $\widetilde{\Phi}\widetilde{\Phi}^{T} = A\Id_N$. Using a block multiplication this equation can be rewritten as
$$
\Lambda + \lambda_{N+1}^2\varphi_{N+1}\varphi_{N+1}^{T} = A\Id_{N}
$$
where $\Lambda = \diag(\lambda_k^2)$ is the $N\times N$ diagonal matrix with $\lambda_k^2$, $k=1,\hdots,N$, on its diagonal. Consequently,
\begin{align*}
&\lambda_k^2 + \lambda_{N+1}^2\varphi_{N+1,k}^2 = A\;\text{ for }k=1,\hdots,N\quad\text{and}\\
&\lambda_{N+1}^2\varphi_{N+1,\ell}\varphi_{N+1,k} = 0\,\text{ for }k\neq\ell.
\end{align*}
But $\lambda_{N+1}>0$ and so all but one entry in $\varphi_{N+1}$ vanish. Since $\varphi_{N+1}$ is a unit norm vector, we see that
$\varphi_{N+1}=\pm e_k$ for some $k\in [N]$ which is contrary to the assumption, so $\Phi$ cannot be strictly $(N+1)$-scalable.
\end{proof}


\section{Scalable Frames and Convex Polytopes}
\label{sec2}

Our characterizations of $m$-scalable frames will be stated in terms of certain convex polytopes and, more generally,
using tools from convex geometry. Therefore, we collect below some key facts and properties needed to state and prove our
results. For a detailed treatment of convex geometry we refer to \cite{matou02, stwi70, web94}.


\subsection{Background on Convex Geometry}
\label{subsec2.1}

In this subsection, let $E$ be a real linear space, and let $X =\{x_i\}_{i=1}^{M}$ be a finite set in $E$. The
\emph{convex hull generated by $X$} is the  compact convex subset of $E$ defined by
$$
\co(X) := \left\{ \sum_{i=1}^{M} \alpha_{i} x_i:  \alpha_i \geq 0, \, \sum_{i=1}^{M}\alpha_i=1\right\}.
$$
The \emph{affine hull generated by $X$} is defined by
$$
\aff(X) := \left\{ \sum_{i=1}^{M} \alpha_{i} x_i:   \, \sum_{i=1}^{M}\alpha_i=1\right\}.
$$
Hence, we have $\co(X)\subset\aff(X)$. Recall that for fixed $a\in\aff(X)$, the set
$$
V(X) := \aff(X) - a = \{y-a : y\in\aff(X)\}
$$
is a subspace of $E$ (which is independent of $a\in\aff(X)$) and that one defines
$$
\dim X := \dim \co(X) := \dim\aff(X) := \dim V(X).
$$
We shall use Carath\'eodory's Theorem for convex polytopes (see, e.g., \cite[Theorem 2.2.12]{stwi70}) in deciding
whether a frame is scalable:

\begin{thm}[Carath\'eodory]\label{t:cara}
Let $X = \{x_1,\ldots,x_k\}$ be a finite subset of $E$  with $d := \dim X$. Then for each $x\in\co(X)$ there exists
$I\subset [k]$ with $\#I = d+1$ such that $x\in\co(X_I)$.
\end{thm}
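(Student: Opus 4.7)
The plan is to prove Carathéodory's theorem by the standard ``minimal representation'' argument: take a convex combination of $x$ that uses the fewest possible vectors, and show that if it uses more than $d+1$ of the $x_i$, a linear dependence among differences lets us eliminate one of them, contradicting minimality.

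First, I would fix $x\in\co(X)$ and choose, among all representations
$$
x = \sum_{i=1}^{k}\alpha_i x_i,\qquad \alpha_i\ge 0,\;\sum_{i=1}^{k}\alpha_i = 1,
$$
one for which the number $m$ of indices with $\alpha_i > 0$ is as small as possible. After relabelling I may assume these indices are $1,\ldots,m$, with each $\alpha_i > 0$. The goal is to show $m \le d+1$.

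Suppose, for a contradiction, that $m \ge d+2$. Then the $m-1$ vectors $x_2 - x_1,\ldots,x_m - x_1$ lie in the $d$-dimensional subspace $V(X) = \aff(X) - x_1$, so they must be linearly dependent. Hence there exist scalars $\beta_2,\ldots,\beta_m$, not all zero, with $\sum_{i=2}^{m}\beta_i(x_i - x_1) = 0$. Setting $\beta_1 := -\sum_{i=2}^{m}\beta_i$, I obtain real numbers $\beta_1,\ldots,\beta_m$, not all zero, such that
$$
\sum_{i=1}^{m}\beta_i x_i = 0 \quad\text{and}\quad \sum_{i=1}^{m}\beta_i = 0.
$$
In particular at least one $\beta_i$ is strictly positive (otherwise, all being nonpositive with zero sum would force all to vanish).

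Now for every $t\in\R$ the vector $x$ admits the representation $x = \sum_{i=1}^{m}(\alpha_i - t\beta_i)x_i$ whose coefficients still sum to $1$. Choosing
$$
t := \min\left\{\frac{\alpha_i}{\beta_i} : i\in[m],\;\beta_i > 0\right\} > 0,
$$
all coefficients $\alpha_i - t\beta_i$ remain nonnegative, and at least one of them becomes zero. This yields a convex representation of $x$ with strictly fewer than $m$ nonzero terms, contradicting the minimality of $m$. Therefore $m\le d+1$, and taking $I$ to be the resulting index set of at most (and, after padding if necessary, exactly) $d+1$ indices finishes the proof.

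The argument is entirely classical; the only subtle point is the sign choice that guarantees the existence of some $\beta_i > 0$ and the correct threshold $t$, which is handled by the observation that $\sum_i \beta_i = 0$ forces the $\beta_i$ to have both signs unless they all vanish.
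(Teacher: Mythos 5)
Your proof is correct: it is the classical minimal-support argument (pick a convex representation with fewest nonzero coefficients, use that more than $d+1$ support points force an affine dependence in the $d$-dimensional space $V(X)$, and eliminate one point via the threshold $t$), and the padding step to reach $\#I=d+1$ exactly is legitimate since $\dim X=d$ forces $k\ge d+1$. The paper does not prove this statement itself but cites it from the literature, and your argument is essentially the standard textbook proof being cited, so there is nothing to reconcile.
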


The \emph{relative interior} of the polytope $\co(X)$ denoted by $ri\co(X)$, is the interior of $\co(X)$ in the
topology induced by $\aff(X)$. We have that $ri\co(X)\neq\emptyset$ as long as $\#X\ge 2$; cf.\ \cite[Lemma 3.2.8]{stwi70}.
Furthermore,
$$
ri\co(X) = \left\{\sum_{i=1}^{M}\lambda_i x_i: \lambda_i>0, \sum_{i=1}^{M}\lambda_i = 1\right\},
$$
see \cite[Theorem 2.3.7]{web94}. Moreover, the interior of $\co(X)$ in $E$ is non-empty if and only if $\aff(X)= E$.

The following lemma characterizes $\dim X$ in terms of $\dim\linspan X$.

\begin{lem}\label{l:affine}
Let $X$ be a finite set of points in $E$.  Put $m := \dim\linspan X$. Then $\dim X\,\in\,\{m-1,m\}. $
Moreover, the following statements are equivalent:
\begin{enumerate}
\item[{\rm (i)}]   $\dim X = m-1$.
\item[{\rm (ii)}]  For all linearly independent $X'\subset X$ with $\dim\linspan X' = m$ we have $
X\setminus X'\subset\aff(X').$
\item[{\rm (iii)}] For some linearly independent $X'\subset X$ with $\dim\linspan X' = m$ we have $
X\setminus X'\subset\aff(X'). $
\end{enumerate}
\end{lem}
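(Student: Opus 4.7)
The plan is to work throughout with $V(X) = \aff(X) - a$, which by assumption is independent of the choice of $a\in\aff(X)$, and in particular we may always choose $a\in X$ so that $V(X)\subseteq \linspan X$. This observation will immediately give the upper bound $\dim X \le m$. For the lower bound I would pick, after relabeling, a linearly independent subset $X' = \{x_1,\ldots,x_m\}\subseteq X$ with $\dim\linspan X' = m$, choose $a = x_1$, and note that the $m-1$ difference vectors $x_2-x_1,\ldots,x_m-x_1$ all lie in $V(X')\subseteq V(X)$. A short computation shows these differences are linearly independent: any relation $\sum_{j\ge 2}\lambda_j(x_j - x_1) = 0$ can be rearranged into $\sum_{j\ge 2}\lambda_j x_j = \bigl(\sum_{j\ge 2}\lambda_j\bigr)x_1$, which forces all $\lambda_j=0$ by the linear independence of $X'$. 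This proves $\dim X \ge m-1$ and establishes the dichotomy $\dim X \in \{m-1, m\}$.

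For the equivalences I would proceed cyclically. For (i)$\Rightarrow$(ii), suppose $\dim X = m-1$ and let $X'\subseteq X$ be any linearly independent subset with $\dim\linspan X' = m$. The $m-1$ differences constructed above are linearly independent elements of $V(X)$, and since $\dim V(X) = m-1$ they must span $V(X)$; but they also span $V(X')$, so $V(X') = V(X)$, whence $\aff(X') = \aff(X)$. In particular $X \setminus X' \subseteq X \subseteq \aff(X) = \aff(X')$. The implication (ii)$\Rightarrow$(iii) is immediate because $\dim\linspan X = m$ guarantees the existence of at least one such $X'$.

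Finally, for (iii)$\Rightarrow$(i), suppose $X'\subseteq X$ is linearly independent with $\dim\linspan X' = m$ and $X\setminus X' \subseteq \aff(X')$. Since $X'\subseteq\aff(X')$ trivially, this gives $X\subseteq\aff(X')$, hence $\aff(X)\subseteq\aff(X')$; the reverse inclusion is automatic from $X'\subseteq X$, so $\aff(X) = \aff(X')$. By the same difference-vector argument as in the first part, $\dim\aff(X') = m-1$, and therefore $\dim X = m-1$.

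The proof is essentially a bookkeeping exercise around the passage between linear span and affine span; there is no genuine obstacle. The only spot that requires a moment of care is verifying that $m$ linearly independent vectors always yield exactly $m-1$ linearly independent differences after fixing a base point—this is the single calculation underlying both the lower bound in the dichotomy and the identity $\dim\aff(X') = m-1$ used in (iii)$\Rightarrow$(i).
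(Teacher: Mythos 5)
Your proposal is correct and follows essentially the same route as the paper's proof: fixing a base point in a linearly independent subset $X'$, noting that the $m-1$ difference vectors are linearly independent and span $V(X')$, and exploiting the chain $V(X')\subseteq V(X)\subseteq\linspan X$ together with equality of affine hulls. The only cosmetic difference is that you phrase (i)$\Rightarrow$(ii) and (iii)$\Rightarrow$(i) via $\aff(X)=\aff(X')$ while the paper writes out the explicit affine combinations, which is the same argument in different notation.
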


\begin{proof}
Let $X = \{x_1,\ldots,x_k\}$. First of all, we observe that for a linearly independent set $X' = \{x_{i_1},\ldots,x_{i_m}\}$
as in (ii) or (iii) we have
$$
\dim V(X') = \dim\linspan\{x_{i_l} -  x_{i_1} : l=2,\ldots,m\} = m - 1.
$$
Therefore, $V(X')\subset V(X)\subset\linspan X$ implies $m-1\leq\dim X\leq m$. Let us now prove the moreover-part of the lemma.

(i)$\Sra$(ii). Assume that $\dim X = m-1$ and let $X' = \{x_{i_1},\ldots,x_{i_m}\}$ be a linearly independent set as in
(ii). From $\dim V(X) = \dim X = m-1$ we obtain $V(X) = V(X')$. Therefore, for each $x_j\in X\setminus X'$ there exist
$\mu_2,\ldots,\mu_m\in\R$ such that
$$
x_j - x_{i_1} = \sum_{i=2}^m\mu_i(x_i - x_{i_1}) = \sum_{i=2}^m\mu_ix_i - \left(\sum_{i=2}^m\mu_i\right)x_{i_1}.
$$
And this implies
$$
x_j = \left(1 - \sum_{i=2}^m\mu_i\right)x_{i_1} + \sum_{i=2}^m\mu_ix_i\,\in\,\aff(X').
$$

(ii)$\Sra$(iii). This is obvious.

(iii)$\Sra$(i). Let $X' = \{x_{i_1},\ldots,x_{i_m}\}$ be a linearly independent set as in (iii). If $x\in X\setminus X'$,
then we have $x\in\aff(X')$ by (iii). Consequently, there exist $\lambda_1,\ldots,\lambda_m\in\R$ with $\sum_{l=1}^m\lambda_l = 1$
such that $x = \sum_{l=1}^m\lambda_l x_{i_l}$. Hence, we obtain
$$
x - x_{i_1} = \sum_{l=1}^m\lambda_l x_{i_l} - \left(\sum_{l=1}^m\lambda_l\right)x_{i_1} = \sum_{l=1}^m\lambda_l(x_{i_l} - x_{i_1})\,\in\,V(X').
$$
This implies $V(X) = V(X')$ and hence (i).
\end{proof}

In the sequel we will have to deal with a special case of the situation in Lemma~\ref{l:affine}, where $X$ is a set of rank-one
orthogonal projections acting on a real or complex Hilbert space $\calH$. In this case, $E$ is the set consisting of the
selfadjoint operators in $\calH$ which is a real linear space.

\begin{cor}\label{c:proj}
Let $X$ be a finite set consisting of rank-one orthogonal projections acting on a Hilbert space $\calH$. Then we have
$$
\dim X = \dim\linspan X - 1.
$$
\end{cor}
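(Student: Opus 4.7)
The plan is to apply Lemma \ref{l:affine} by verifying condition (iii), using the single crucial observation that every rank-one orthogonal projection has trace equal to $1$.

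Set $m := \dim\linspan X$ and choose any linearly independent subset $X' = \{P_1,\ldots,P_m\} \subset X$ that spans $\linspan X$ (such a subset exists by extracting a basis from the finite set $X$). For any $P \in X \setminus X'$, the defining property of $X'$ gives a unique representation
$$
P \,=\, \sum_{i=1}^{m}\lambda_i P_i
$$
with scalars $\lambda_i \in \R$. Now I would apply the trace functional to both sides: since each $P_i$ and $P$ is a rank-one orthogonal projection, $\Tr(P_i) = \Tr(P) = 1$, and therefore
$$
1 \,=\, \Tr(P) \,=\, \sum_{i=1}^{m}\lambda_i\Tr(P_i) \,=\, \sum_{i=1}^{m}\lambda_i.
$$
This is precisely the affine combination condition, so $P \in \aff(X')$. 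Since $P \in X\setminus X'$ was arbitrary, we have $X\setminus X' \subset \aff(X')$, which is condition (iii) of Lemma \ref{l:affine}. The implication (iii)$\Rightarrow$(i) of that lemma then yields $\dim X = m - 1 = \dim\linspan X - 1$.

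The only non-routine ingredient is the trace identity $\Tr(P) = 1$ for rank-one orthogonal projections, which converts the linear relation guaranteed by the span into the affine relation required by the lemma. Once this is in place, the rest is bookkeeping; there is no real obstacle, since Lemma \ref{l:affine} already restricts $\dim X$ to $\{m-1,m\}$ and we are simply exhibiting why the lower of the two values is always attained in this setting.
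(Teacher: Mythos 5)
Your proof is correct and follows essentially the same route as the paper: extract a linearly independent spanning subset $X'$, use $\Tr(P)=1$ for rank-one orthogonal projections to upgrade the linear relation to an affine one, and invoke Lemma \ref{l:affine} (iii)$\Rightarrow$(i). No gaps.
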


\begin{proof}
Let $X = \{P_1,\ldots,P_k\}$, $m := \dim\linspan X$, and let $X'\subset X$ be a linearly independent subset of $X$ such that
$\dim\linspan X' = m$. Without loss of generality assume that $X' = \{P_1,\ldots,P_m\}$. Let $j\in\{m+1,\ldots,k\}$. Then
there exist $\lambda_1,\ldots,\lambda_m\in\R$ such that $P_j = \sum_{i=1}^m\lambda_iP_i$. This implies
$$
1 = \Tr P_j = \Tr\left(\sum_{i=1}^m\lambda_iP_i\right) = \sum_{i=1}^m\lambda_i\Tr(P_i) = \sum_{i=1}^m\lambda_i,
$$
which shows that $P_j\in\aff(X')$. The statement now follows from Lemma \ref{l:affine}.
\end{proof}


\subsection{Scalability in Terms of Convex Combinations of Rank-One Matrices}
\label{subsec2.2}

Here, and for the rest of this paper, for a frame $\Phi = \{\varphi_i\}_{i=1}^M$ in $\fr(M,N)$ we set
$$X_\Phi := \{\varphi_i\varphi_i^T : i\in [M]\}.$$ This is a subset of the space of all real symmetric $N\times N$-matrices
which we shall denote by $S_N$. We shall also denote the set of positive multiples of the identity by
$\boldsymbol I_+ := \{\alpha\Id_N : \alpha > 0\}$.

\begin{prop}\label{p:charac_conv}
For a frame $\Phi\in\fr(M,N)$ the following statements are equivalent:
\begin{enumerate}
\item[{\rm (i)}]  $\Phi$ is  scalable, respectively, strictly scalable.
\item[{\rm (ii)}] $\boldsymbol I_+\cap\co(X_\Phi)\neq\emptyset$, respectively,  $\boldsymbol I_+\cap\,ri \co(X_\Phi)\neq\emptyset$.
\end{enumerate}
\end{prop}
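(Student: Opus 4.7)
The plan is to unwind the definition of scalability into the matrix equation
\[
\sum_{k=1}^M c_k^2\,\varphi_k\varphi_k^T \,=\, A\,\Id_N
\]
(with $c_k\ge 0$ and $A>0$, the positivity of $A$ being forced by the definition of a frame bound) and then interpret the coefficients $c_k^2$, up to rescaling, as a probability vector. Modulo this reparametrization, conditions (i) and (ii) become verbatim the same statement, and both implications will reduce to this bookkeeping.

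For the forward direction in the non-strict case, I would set $T := \sum_k c_k^2$ and first observe $T>0$: otherwise the left-hand side above vanishes while $A\,\Id_N\neq 0$. Then $\mu_k := c_k^2/T$ is a convex-combination coefficient vector and
\[
\sum_k \mu_k\,\varphi_k\varphi_k^T \,=\, \tfrac{A}{T}\,\Id_N \,\in\, \boldsymbol I_+\cap\co(X_\Phi).
\]
The converse is even more direct: given $\alpha\,\Id_N = \sum_k \mu_k\varphi_k\varphi_k^T$ with $\mu_k\ge 0$, $\sum_k\mu_k=1$, and $\alpha>0$, the choice $c_k := \sqrt{\mu_k}$ recovers the defining scaling identity, so $\{c_k\varphi_k\}$ is tight.

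For the strict case, I would invoke the explicit description of the relative interior that is already recalled in the excerpt,
\[
ri\,\co(X_\Phi) \,=\, \Bigl\{\sum_k \lambda_k\,\varphi_k\varphi_k^T : \lambda_k>0,\ \textstyle\sum_k \lambda_k = 1\Bigr\},
\]
after which the equivalence $c_k>0$ for all $k$ $\Leftrightarrow$ $\mu_k>0$ for all $k$ does all the work. The only place that needs care is the backward direction: membership of $\alpha\,\Id_N$ in $ri\,\co(X_\Phi)$ guarantees merely the existence of \emph{some} positive-weight representation, not that every convex combination realizing $\alpha\,\Id_N$ has positive weights—fortunately that is exactly what is required to produce strictly positive scalars $c_k=\sqrt{\mu_k}$.

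I do not anticipate any genuine obstacle here; the only substantive fact beyond elementary linear algebra is the quoted description of $ri\,\co(X_\Phi)$, and the remainder is the $c_k^2\leftrightarrow\mu_k$ normalization together with the automatic positivity of $A$.
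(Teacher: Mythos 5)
Your argument is correct and follows essentially the same route as the paper: normalize the weights $c_k^2$ by their sum to obtain convex coefficients, so that tightness becomes membership of a positive multiple of $\Id_N$ in $\co(X_\Phi)$, with the strict case handled by the quoted description of $ri\,\co(X_\Phi)$ as the set of all-positive convex combinations. The paper's proof does exactly this for the non-strict forward direction and dismisses the converse and the strict case as obvious/analogous, so your write-up simply makes those routine steps explicit.
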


\begin{proof}
Assume that the frame $\Phi = \{\varphi_i\}_{i=1}^M$ is scalable. Then there exist non-negative scalars $c_1,\ldots,c_M$ such that
$$
\sum_{i=1}^Mc_i\varphi_i\varphi_i^T = \Id.
$$
Put $\alpha := \sum_{i=1}^Mc_i$. Then $\alpha > 0$ and with $\lambda_i := \alpha^{-1}c_i$ we have
$$
\sum_{i=1}^M\lambda_i\varphi_i\varphi_i^T = \alpha^{-1}\Id\quad\text{and}\quad\sum_{i=1}^M\lambda_i = 1.
$$
Hence $\alpha^{-1}\Id\in\co(X_\Phi)$. The converse direction is obvious.
\end{proof}

As pointed out earlier, for $m\leq m'$ we have  $\sca(m)\subset\sca(m')$.  Given $\Phi\in \sca(M,N) = \sca(M)$,
there exists $m\leq M$ such that such that $\Phi\in\sca(m)$, e.g., we can always take $m=M$.  However, the next result gives a ``canonical'' integer $m=m_\Phi$ that  is in a way ``optimal''. 

\begin{prop}\label{p:less_vectors}
For a frame $\Phi=\{\varphi_k\}_{k=1}^{M}\in\fr$, put $m = m_\Phi := \dim\linspan X_\Phi$. Then the following statements are equivalent:
\begin{enumerate}
\item[{\rm (i)}]  $\Phi$ is scalable.
\item[{\rm (ii)}] $\Phi$ is $m$-scalable.
\end{enumerate}
\end{prop}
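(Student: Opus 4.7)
The implication (ii)$\Rightarrow$(i) is immediate: given a scalable $\Phi_I$ with $\#I=m$, extending the non-negative weights by zero on $[M]\setminus I$ scales $\Phi$ itself to a tight frame. The substantive direction is (i)$\Rightarrow$(ii), and my plan is to reduce it to Carath\'eodory's Theorem \ref{t:cara} applied not to $X_\Phi$ itself but to the associated set of rank-one orthogonal projections $P_i := \varphi_i\varphi_i^T/\|\varphi_i\|^2$, where Corollary \ref{c:proj} pins down the affine dimension exactly.

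In detail, the plan is as follows. First I would discard any zero frame vectors (they are inert under every scaling and can be adjoined to $I$ afterwards to reach size $m$ if needed), so that each $P_i$ is a genuine rank-one orthogonal projection. Next, Proposition \ref{p:charac_conv} supplies $\alpha>0$ and $c_i\ge 0$ with $\sum_i c_i=1$ and $\alpha\,\Id_N = \sum_{i=1}^M c_i\varphi_i\varphi_i^T$. Setting $\mu_i := c_i\|\varphi_i\|^2/(\alpha N)$ and taking traces on both sides gives $\sum_i\mu_i=1$ and the convex representation
$$
\tfrac{1}{N}\Id_N \,=\, \sum_{i=1}^M \mu_i P_i \,\in\, \co\{P_1,\ldots,P_M\}.
$$
Because each $P_i$ is a positive scalar multiple of $\varphi_i\varphi_i^T$, one has $\linspan\{P_i\}=\linspan X_\Phi$, so $\dim\linspan\{P_i\}=m$, and Corollary \ref{c:proj} then yields $\dim\{P_1,\ldots,P_M\}=m-1$. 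Carath\'eodory's Theorem \ref{t:cara} therefore produces $I\subseteq[M]$ with $\#I=m$ and non-negative $\widetilde\mu_i$ summing to $1$ such that $\tfrac{1}{N}\Id_N=\sum_{i\in I}\widetilde\mu_i P_i$. Reversing the substitution via $\widetilde c_i := N\widetilde\mu_i/\|\varphi_i\|^2\ge 0$ yields $\Id_N=\sum_{i\in I}\widetilde c_i\varphi_i\varphi_i^T$; applying both sides to an arbitrary $v\in\R^N$ forces $\linspan\{\varphi_i\}_{i\in I}=\R^N$, so $\Phi_I$ is genuinely a frame, and it is scalable with $\#I=m$.

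The main obstacle---and the reason Corollary \ref{c:proj} is isolated as a separate result---is that Carath\'eodory cannot be applied directly to $X_\Phi$: whenever the $\|\varphi_i\|$ are not constant one typically has $\dim X_\Phi = m$ rather than $m-1$ (already the two vectors $\varphi_1=1,\varphi_2=2$ in $\R^1$ give $m=\dim X_\Phi=1$), so a direct application of Carath\'eodory to $X_\Phi$ only produces $\#I\le m+1$. Normalizing to the trace-one projections $P_i$ is precisely what creates the affine-versus-linear dimension gap of one needed to tighten the bound to $\#I=m$.
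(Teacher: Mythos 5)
Your proof is correct and takes essentially the same route as the paper: both reduce (i)$\Rightarrow$(ii) to Proposition \ref{p:charac_conv}, then apply Carath\'eodory's Theorem \ref{t:cara} together with Corollary \ref{c:proj} to the trace-one rank-one projections so that the affine dimension drops to $m-1$. The only difference is bookkeeping: the paper normalizes once and for all by deleting zero vectors and assuming $\Phi$ is unit-norm, while you keep $\Phi$ as is and carry the factors $\|\varphi_i\|^2$ through the convex weights explicitly.
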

\begin{proof}
Clearly, (ii) implies (i). Conversely, let $\Phi = \{\varphi_i\}_{i=1}^M$ be scalable. After possibly removing zero vectors from the
frame and thereby reducing $M$ (which does not affect the value of $m$), we may assume that $\Phi$ is unit-norm. By Proposition
\ref{p:charac_conv}, there exists $\alpha > 0$ such that $\alpha\Id_N\in\co(X_\Phi)$. Therefore, from Theorem \ref{t:cara} it
follows that there exists $I\subset [M]$ with $\#I = \dim X_\Phi + 1$ such that $\alpha\Id_N\in\co(X_{\Phi_I})$. Hence, $\Phi_I$
is scalable by Proposition \ref{p:charac_conv}. And since $\dim X_\Phi = \dim\linspan X_\Phi - 1$ by Corollary \ref{c:proj}, the claim follows.
\end{proof}

As $X_\Phi\subset S_N$ and $\dim S_N = N(N+1)/2$, we immediately obtain the following corollary.

\begin{cor}\label{highredund}
For $M\ge N(N+1)/2$ we have
$$
\sca(M,N) = \sca\left(M,N,\frac{N(N+1)}{2}\right).
$$
\end{cor}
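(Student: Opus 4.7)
The plan is to derive Corollary \ref{highredund} as an immediate consequence of Proposition \ref{p:less_vectors} combined with the ambient dimension of the symmetric matrices. The inclusion $\sca(M,N,N(N+1)/2)\subset\sca(M,N)$ is automatic from Definition \ref{def1}, so only the reverse direction requires argument.

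For the nontrivial inclusion, I would take an arbitrary $\Phi\in\sca(M,N)$ and invoke Proposition \ref{p:less_vectors} to conclude that $\Phi\in\sca(M,N,m_\Phi)$, where $m_\Phi=\dim\linspan X_\Phi$. Since each $\varphi_i\varphi_i^T$ is a real symmetric $N\times N$-matrix, we have $X_\Phi\subset S_N$, and hence
$$
m_\Phi \,=\, \dim\linspan X_\Phi \,\leq\, \dim S_N \,=\, \frac{N(N+1)}{2}.
$$
By the hypothesis $M\geq N(N+1)/2$ this upper bound does not exceed $M$, so the monotonicity observation $\sca(m)\subset\sca(m')$ for $m\leq m'$ (noted just after Definition \ref{def1}) yields $\Phi\in\sca(M,N,N(N+1)/2)$.

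The one point that I would verify carefully, to make sure Proposition \ref{p:less_vectors} applies with its output $m_\Phi$ being an admissible parameter in Definition \ref{def1} (i.e.\ $N\leq m_\Phi\leq M$), is the lower bound $m_\Phi\geq N$. This follows by picking $N$ linearly independent frame vectors $\varphi_{i_1},\ldots,\varphi_{i_N}$ (which exist because $\Phi$ spans $\R^N$), forming the invertible matrix $V=[\varphi_{i_1}\mid\cdots\mid\varphi_{i_N}]$, and observing that the congruence $A\mapsto VAV^T$ is a linear automorphism of $S_N$ sending $e_je_j^T$ to $\varphi_{i_j}\varphi_{i_j}^T$; hence the latter are linearly independent in $S_N$. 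There is no real obstacle in this corollary: the work has already been done in Proposition \ref{p:less_vectors} and Corollary \ref{c:proj}, and the present statement is simply the recognition that $S_N$ has dimension $N(N+1)/2$.
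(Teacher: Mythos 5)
Your proposal is correct and follows essentially the same route as the paper: the corollary is deduced immediately from Proposition \ref{p:less_vectors} together with $X_\Phi\subset S_N$ and $\dim S_N = N(N+1)/2$. Your extra verification that $m_\Phi\geq N$ (via the congruence $A\mapsto VAV^T$) is a sound, if unstated-in-the-paper, detail and does not change the argument.
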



\subsection{Convex Polytopes Associated with $\mathbf{m}$-Scalable Frames}
\label{subsec2.3}

Let $\Phi = \{\vphi_k\}_{k=1}^M$ be a frame for $\R^N$. Then the analysis operator of the scaled frame $\{c_k\vphi_k\}_{k=1}^M$
is given by $C\Phi^T$, where $C$ is the diagonal matrix with the values $c_k$ on its diagonal. Hence, the frame $\Phi$ is
scalable if and only if
\begin{equation}\label{e:scfrm}
\Phi C^{2} \Phi^{T} = A\Id_N,
\end{equation}
where $A > 0$. Similarly, $\Phi$ is $m$-scalable if and only if \eqref{e:scfrm} holds with $C = \diag(c)$, where
$c\in [0,\infty)^M$ such that $\|c\|_0\le m$. Here, we used the so-called ``zero-norm'' (which is in fact not a norm),
defined by
$$
\|x\|_0 := \#\{k\in [n] : x_k\neq 0\},\quad x\in\R^n.
$$
Comparing corresponding entries from left- and right-hand sides of \eqref{e:scfrm}, it is seen that for a frame to be
$m$-scalable it is necessary and sufficient that there exists a vector $u = (c_{1}^{2},c_{2}^{2},\ldots,c_{M}^{2})^{T}$
with $\|u\|_0\le m$ which is a solution of the following linear system of $\tfrac{N(N+1)}{2}$ equations in $M$ unknowns:
\begin{equation}\label{nxm}
\begin{cases}
\sum\limits_{j=1}^M\vphi_j(k)^2y_j = A &\text{for }k=1,\ldots,N,\\
\sum\limits_{j=1}^M\vphi_j(\ell)\vphi_j(k)y_j = 0 &\text{for }\ell,k=1,\ldots,N,\,k > \ell.
\end{cases}
\end{equation}

Subtraction of equations in the first system in \eqref{nxm} leads to the new {\em homogeneous} linear system
\begin{equation}\label{nxm2}
\begin{cases}
\sum\limits_{j=1}^M\big(\vphi_j(1)^2 - \vphi_j(k)^2\big)y_j = 0 &\text{for }k=2,\ldots,N,\\
\sum\limits_{j=1}^M\vphi_j(\ell)\vphi_j(k)y_j = 0 &\text{for }\ell,k=1,\ldots,N,\,k > \ell.
\end{cases}
\end{equation}
It is not hard to see that we have not lost information in the last step, hence $\Phi$ is $m$-scalable if and only
if there exists a nonnegative vector $u\in\R^M$ with $\|u\|_0\le m$ which is a solution to \eqref{nxm2}. In matrix form,
\eqref{nxm2} reads
\begin{equation*}
F(\Phi)u = 0,
\end{equation*}
where the $(N-1)(N+2)/2\times M$ matrix $F(\Phi)$ is given by
\begin{equation*}
F(\Phi) = \begin{pmatrix} F(\varphi_{1}) & F(\varphi_{2})  & \hdots & F(\varphi_{M})  \end{pmatrix},
\end{equation*}
where $F : \R^N\to\R^{d}$, $d := (N-1)(N+2)/2$, is defined by
\begin{equation*}\label{fctF}
F(x)=
\begin{pmatrix}
F_{0}(x)\\ F_{1}(x)\\ \vdots\\ F_{N-1}(x)
\end{pmatrix},
\qquad
F_{0}(x) =
\begin{pmatrix} x_{1}^{2}-x_{2}^{2}\\ x_{1}^{2}-x_{3}^{2} \\ \vdots \\ x_{1}^{2}-x_{N}^{2}\end{pmatrix},
\qquad
F_{k}(x) =
\begin{pmatrix}x_{k}x_{k+1}\\ x_{k}x_{k+2}\\ \vdots \\ x_{k}x_{N}\end{pmatrix},
\end{equation*}
and $F_{0}(x)\in \R^{N-1}$, $F_{k}(x) \in \R^{N-k}$, $k=1, 2, \hdots, N-1$.

Summarizing, we have just proved the following proposition.

\begin{prop}\label{p:F}
A frame $\Phi$ for $\R^N$ is  $m$-scalable, respectively,  strictly $m$-scalable  if and only if there exists a nonnegative
$u\in\ker F(\Phi)\setminus\{0\}$ with $\|u\|_0\le m$, respectively,  $\|u\|_0 = m$.
\end{prop}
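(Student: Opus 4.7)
The plan is to treat the proposition as the formal record of the algebraic computation already carried out just above its statement, supplying the one verification that is not entirely routine: that the passage from the inhomogeneous system \eqref{nxm} to the homogeneous system \eqref{nxm2} loses no information, provided the candidate vector $u$ is nonzero and nonnegative.

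First, I would dispatch the forward direction. If $\Phi$ is $m$-scalable, by Definition~\ref{def1} there is $I\subseteq[M]$ of size $m$ and nonnegative scalars $c_i$, $i\in I$, such that $\{c_i\varphi_i\}_{i\in I}$ is tight with some constant $A>0$. Extending $c$ by zero off $I$ yields $c\in[0,\infty)^M$ with $\|c\|_0\leq m$ satisfying $\Phi\,\diag(c^2)\,\Phi^T=A\,\Id_N$. Setting $u_j:=c_j^2$ gives a nonnegative vector satisfying \eqref{nxm}, hence the derived homogeneous system \eqref{nxm2}, i.e.\ $F(\Phi)u=0$. Since $A>0$, the first block of \eqref{nxm} forces $u\neq 0$, and plainly $\|u\|_0=\|c\|_0\leq m$. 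In the strict case, $c_i>0$ for all $i\in I$, so $\|u\|_0=m$.

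Second, for the converse, I would start from a nonnegative $u\in\ker F(\Phi)\setminus\{0\}$ with $\|u\|_0\leq m$ and define
\begin{equation*}
A:=\sum_{j=1}^M\varphi_j(1)^2 u_j.
\end{equation*}
The $F_0$-block of $F(\Phi)u=0$ yields $\sum_j\varphi_j(k)^2 u_j=A$ for every $k\in[N]$, while the blocks $F_1,\dots,F_{N-1}$ yield the off-diagonal equations in \eqref{nxm}. Thus $\Phi\,\diag(u)\,\Phi^T=A\,\Id_N$, and setting $c_j:=\sqrt{u_j}$ gives a nonnegative scaling with $\|c\|_0=\|u\|_0\leq m$. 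Taking $I:=\operatorname{supp}(u)$ and padding with arbitrary indices if $\|u\|_0<m$, the set $\Phi_I$ carries this scaling to a tight system; because $c_j>0$ on $I=\operatorname{supp}(u)$, the tightness forces $\Phi_I$ to span $\R^N$, so it is a frame, and $\Phi$ is $m$-scalable. The strict variant $\|u\|_0=m$ gives $c_j>0$ throughout $I$, hence strict $m$-scalability.

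The only genuine obstacle is verifying that the $A$ recovered above is in fact positive, so that we really obtain a \emph{tight} frame rather than the trivial zero identity. Summing the identity $\sum_j\varphi_j(k)^2 u_j=A$ over $k=1,\dots,N$ gives
\begin{equation*}
N A=\sum_{j=1}^M\|\varphi_j\|^2\,u_j,
\end{equation*}
a sum of nonnegative terms. If $A=0$ then $\|\varphi_j\|^2 u_j=0$ for every $j$, so $u$ is supported on the indices of zero frame vectors; after discarding such redundant indices (which does not affect $F(\Phi)u$ since $F(0)=0$) we may assume $\Phi$ non-degenerate, whereupon $u=0$, contradicting $u\neq 0$. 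Hence $A>0$, which closes the argument and matches the forward direction.
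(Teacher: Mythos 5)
Your proof takes essentially the same route as the paper: Proposition~\ref{p:F} is stated there as the summary of the computation immediately preceding it (tightness rewritten as \eqref{e:scfrm}, entrywise comparison giving \eqref{nxm}, subtraction giving the homogeneous system \eqref{nxm2}, i.e.\ $F(\Phi)u=0$), and the one point the paper dismisses with ``it is not hard to see that we have not lost information'' is exactly the recovery of a positive tight-frame constant, which you make explicit by setting $A=\sum_j\varphi_j(1)^2u_j$ and summing the diagonal identities to get $NA=\sum_j\|\varphi_j\|^2u_j$. The forward direction, the padding of $\operatorname{supp}(u)$ to an index set of cardinality $m$, and the strict variant are all handled correctly; in this respect your write-up is more careful than the paper's own.

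One caveat concerns the sentence ``after discarding such redundant indices we may assume $\Phi$ non-degenerate.'' This is not a harmless normalization: it is the place where a non-degeneracy hypothesis silently enters. If $\Phi$ contains the zero vector, the converse implication of the statement, read literally, fails: for $\Phi=\{e_1,\,(e_1+e_2)/\sqrt 2,\,0\}$ in $\R^2$, the vector $u=(0,0,1)^T$ is a nonzero nonnegative element of $\ker F(\Phi)$ with $\|u\|_0=1\le m$, yet no subset $\Phi_I$ is a scalable frame, so $\Phi$ is not $m$-scalable for any $m$. Your argument therefore proves the proposition under the implicit assumption $\Phi\in\fr^*$, equivalently under the extra requirement $\sum_j\|\varphi_j\|^2u_j>0$, which is precisely what your $A>0$ step needs. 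Since the paper's derivation glosses over the same degenerate case, this is not a defect relative to the paper's proof; it is just better to state the hypothesis explicitly than to fold it into a ``without loss of generality.''
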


We will now utilize the above reformulation to characterize $m$-scalable frames in terms of the properties of
convex polytopes of the type $\co(F(\Phi_I))$, $I\subset [M]$. One of the key tools will be Farkas' lemma
(see, e.g., \cite[Lemma  1.2.5]{matou02}).

\begin{lem}[Farkas' Lemma]\label{l:farkas}
For every real $N\times M$-matrix $A$ exactly one of the following cases occurs:
\begin{itemize}
\item[(i)] The system of linear equations $Ax=0$ has a nontrivial nonnegative solution $x\in\R^{M}$ {\rm (}i.e.,
all components of $x$ are nonnegative and at least one of them is strictly positive.{\rm )}
\item[(ii)] There exists $y\in\R^N$ such that $A^{T}y$ is a vector with all entries strictly positive.
\end{itemize}
\end{lem}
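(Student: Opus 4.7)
The plan is to establish the two alternatives separately: first that (i) and (ii) cannot hold simultaneously (exclusivity), and then that at least one of them must hold (existence). Exclusivity is a one-line computation; existence reduces to a geometric separation argument applied to the convex hull of the columns of $A$.

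For exclusivity, suppose both (i) and (ii) were valid: pick a nontrivial nonnegative $x\in\R^M$ with $Ax=0$ and a $y\in\R^N$ with $A^Ty$ strictly positive. Then
\[
0 = \ip{y}{Ax} = \ip{A^Ty}{x} = \sum_{j=1}^{M} (A^Ty)_j\, x_j,
\]
but every summand on the right is nonnegative, and at least one $x_j$ is strictly positive while all $(A^Ty)_j$ are strictly positive, so the sum is strictly positive, a contradiction.

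For existence, denote by $a_1,\ldots,a_M$ the columns of $A$ and put $K := \co\{a_1,\ldots,a_M\}$. Any nontrivial nonnegative $x$ with $Ax=0$ can, after normalization by $\sum_j x_j > 0$, be rewritten as a representation $0 = \sum_j \lambda_j a_j$ with $\lambda_j\ge 0$ and $\sum_j\lambda_j = 1$, hence as $0 \in K$; conversely, any such representation yields a nontrivial nonnegative element of $\ker A$. Thus it suffices to show that if $0 \notin K$, then (ii) holds. Since $K$ is a finite convex hull in $\R^N$ it is compact, so the strict separation theorem in $\R^N$ (i.e., the finite-dimensional Hahn–Banach theorem applied to the disjoint compact convex sets $\{0\}$ and $K$) yields $y\in\R^N$ and $c>0$ such that $\ip{y}{v}\ge c$ for every $v\in K$. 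In particular $\ip{y}{a_j}\ge c > 0$ for each $j\in[M]$, so $A^Ty$ has all entries strictly positive and (ii) holds.

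The main technical point is the appeal to \emph{strict} separation of a point from a compact convex set. The compactness of $K$, coming from its description as the convex hull of finitely many points, is exactly what allows us to upgrade from non-strict separation (which would only produce $\ip{y}{v}\ge 0$, and hence $A^Ty$ merely nonnegative) to the uniform bound $\ip{y}{v}\ge c>0$, and thus to the strict positivity required in (ii).
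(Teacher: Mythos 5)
Your proof is correct. Note that the paper itself does not prove this lemma at all: it is quoted as a known tool with a citation to Matou\v{s}ek's \emph{Lectures on Discrete Geometry} (Lemma 1.2.5), so there is no internal argument to compare yours against. Your route is the standard one: the orthogonality computation $0=\ip{A^Ty}{x}$ correctly rules out the simultaneous validity of (i) and (ii), and the reduction of (i) to the statement $0\in\co\{a_1,\ldots,a_M\}$ is exact in both directions (normalizing by $\sum_j x_j>0$ one way, reading off a nonnegative kernel vector the other way). The decisive step, strict separation of $0$ from the convex hull when $0\notin K$, is legitimately available because the convex hull of finitely many points is compact (in particular closed), and you correctly flag that this is what yields the uniform bound $\ip{y}{a_j}\ge c>0$ rather than mere nonnegativity of $A^Ty$. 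This matches the textbook proofs of Farkas-type alternatives, so your argument can stand as a self-contained substitute for the citation.
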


In our  first main result we use the notation $\co(A)$ for a matrix $A$ which we simply define as the
convex hull of the set of column vectors of $A$.

\begin{thm}\label{poly}
Let $M\geq m\geq N\geq 2$, and let $\Phi = \{\varphi_k\}_{k=1}^M$ be a frame for $\R^N$. Then the following
statements are equivalent:
\begin{enumerate}
\item[{\rm (i)}] $\Phi$ is $m$-scalable, respectively, strictly $m$-scalable,
\item[{\rm (ii)}] There exists a subset $I\subset [M]$ with $\# I=m$ such that $0\in\co(F(\Phi_{I}))$,  respectively, $0\in ri\co(F(\Phi_{I}))$.
\item[{\rm (iii)}] There exists a subset $I\subset [M]$ with $\# I = m$ for which there is no $h\in\R^d$ with $\<F(\vphi_k),h\> > 0$ for all $k\in I$, respectively, with $\ip{F(\varphi_k)}{h}\geq 0$ for all $k\in I$, with at least one of the inequalities being strict.
\end{enumerate}
\end{thm}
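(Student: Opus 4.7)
The plan is to prove \textup{(i)}$\Llra$\textup{(ii)} via Proposition~\ref{p:F}, and then \textup{(ii)}$\Llra$\textup{(iii)} via Farkas' Lemma (and its Gordan/Stiemke variant for the strict case).

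For \textup{(i)}$\Llra$\textup{(ii)}, I would translate a nonnegative kernel vector of $F(\Phi)$ into a convex combination of the columns of $F(\Phi)$ vanishing at $0$. Concretely, if $\Phi$ is $m$-scalable, Proposition~\ref{p:F} gives a nonnegative $u\in\ker F(\Phi)\setminus\{0\}$ with $\|u\|_0\le m$. Let $J:=\{k:u_k>0\}$, so $|J|\le m$. Setting $\lambda_k := u_k/\sum_j u_j$ for $k\in J$ yields $\sum_{k\in J}\lambda_k F(\varphi_k)=0$ with $\lambda_k\ge 0$, $\sum\lambda_k=1$, i.e., $0\in\co(F(\Phi_J))$. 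Any $I\supset J$ with $|I|=m$ satisfies $0\in\co(F(\Phi_I))$. Conversely, if $0\in\co(F(\Phi_I))$ for some $|I|=m$, the convex combination provides a nonnegative vector in $\ker F(\Phi)\setminus\{0\}$ (extending by zeros outside $I$) with $\|u\|_0\le m$, and Proposition~\ref{p:F} gives scalability. For the strict case, $\|u\|_0=m$ forces the support to equal a set $I$ of size exactly $m$ with all coefficients strictly positive, which, in view of the characterization of $ri\,\co(F(\Phi_I))$ recalled in Subsection~\ref{subsec2.1}, is exactly the condition $0\in ri\,\co(F(\Phi_I))$.

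For \textup{(ii)}$\Llra$\textup{(iii)} in the non-strict case, I would apply Farkas' Lemma (Lemma~\ref{l:farkas}) directly to the matrix $A:=F(\Phi_I)\in\R^{d\times m}$. The condition $0\in\co(F(\Phi_I))$ is precisely alternative (i) of Farkas (a nontrivial nonnegative $u$ with $Au=0$), so its negation is alternative (ii): the existence of $h\in\R^d$ such that all entries of $A^T h$ are strictly positive, i.e., $\langle F(\varphi_k),h\rangle>0$ for all $k\in I$. Negating on both sides gives the stated equivalence.

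For the strict case in \textup{(iii)}, the dual I need is the Gordan/Stiemke variant: $0\in ri\,\co(F(\Phi_I))$ (equivalently, $Au=0$ has a \emph{strictly positive} solution) if and only if there is no $h$ with $A^T h\ge 0$ and $A^T h\ne 0$. I would derive this from the standard Farkas lemma by the usual trick of appending a slack variable: setting $\tilde A := [A \mid -A\mathbf 1]$, existence of strictly positive $u$ with $Au=0$ is equivalent to existence of nonnegative $\tilde u\ne 0$ with $\tilde A\tilde u=0$; applying Farkas to $\tilde A$ and unwinding yields precisely the non-existence of $h$ with $A^T h\ge 0$, $A^T h\ne 0$.

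The main obstacle I anticipate is the strict-scalability bookkeeping: ensuring the size-$m$ constraint $\#I=m$ is preserved under the translation between kernel vectors and convex combinations (in particular that the support has exactly $m$ elements, not fewer, in the strict case) and carefully invoking the right relative-interior/Stiemke version of Farkas rather than the plain version stated in Lemma~\ref{l:farkas}. Everything else is essentially a direct application of Proposition~\ref{p:F} and duality.
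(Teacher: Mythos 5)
Your overall architecture coincides with the paper's proof: (i)$\Leftrightarrow$(ii) by translating nonnegative kernel vectors of $F(\Phi_I)$ into convex combinations (this is exactly the equivalences the paper records before invoking Proposition~\ref{p:F}), and (ii)$\Leftrightarrow$(iii) in the non-strict case by Lemma~\ref{l:farkas}; your support bookkeeping (enlarging the support $J$ to any $I\supset J$ with $\#I=m$, and $\|u\|_0=m$ pinning down $I$ in the strict case) is fine.

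The genuine gap is your derivation of the strict-case dual. The statement you need, namely that $0\in ri\co(F(\Phi_I))$ if and only if there is no $h$ with $F(\Phi_I)^Th\ge 0$ and $F(\Phi_I)^Th\neq 0$ (a Stiemke-type alternative), is true, but your slack-variable reduction does not prove it. With $\tilde A=[A\mid -A\mathbf{1}]$, the vector $\tilde u=(1,\dots,1,1)^T$ always satisfies $\tilde A\tilde u=A\mathbf{1}-A\mathbf{1}=0$ and is nonnegative and nonzero; hence the condition ``there exists nonnegative $\tilde u\neq 0$ with $\tilde A\tilde u=0$'' holds for \emph{every} $A$ and cannot be equivalent to the existence of a strictly positive $u$ with $Au=0$. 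Concretely, for $A=\left(\begin{smallmatrix}1&-1&0\\0&0&1\end{smallmatrix}\right)$ the kernel is spanned by $(1,1,0)^T$, so no strictly positive solution exists, yet $(1,1,0,0)^T\in\ker\tilde A$ is nonnegative and nonzero; consistently, Farkas applied to $\tilde A$ yields nothing, since its second alternative would require $A^Th>0$ componentwise together with $-\ip{A\mathbf{1}}{h}>0$, which is impossible. A correct reduction would scale a strictly positive solution so that $u=\mathbf{1}+w$ with $w\ge 0$ and apply the \emph{inhomogeneous} (affine) Farkas lemma to $Aw=-A\mathbf{1}$, which is not the version stated in Lemma~\ref{l:farkas}; alternatively, do as the paper does and quote the known relative-interior separation result \cite[Lemma 3.6.5]{web94} for the strict case. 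As it stands, the strict half of (ii)$\Leftrightarrow$(iii) is unproved in your write-up.
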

\begin{proof}
(i)$\Slra$(ii). This equivalence follows directly if we can show the following equivalences for $\Psi\subset\Phi$:
\begin{align}
\begin{split}\label{e:equis}
0\in\co(F(\Psi))&\;\Llra\;\ker F(\Psi)\setminus\{0\}\text{ contains a nonnegative vector and}\\
0\in ri\co(F(\Psi))&\;\Llra\;\ker F(\Psi)\text{ contains a positive vector.}
\end{split}
\end{align}
The implication "$\Sra$" is trivial in both cases. For the implication "$\Sla$" in the first case let
$I\subset [M]$ be such that $\Psi = \Phi_I$, $I = \{i_1,\ldots,i_m\}$, and let $u = (c_1,\ldots,c_m)^T\in\ker F(\Psi)$
be a non-zero nonnegative vector. Then $A := \sum_{k=1}^mc_k > 0$ and with $\la_k := c_k/A$, $k\in [m]$, we have
$\sum_{k=1}^m\la_k = 1$ and $\sum_{k=1}^m\la_kF(\vphi_{i_k}) = A^{-1}F(\Psi)u = 0$. Hence $0\in\co(F(\Psi))$. The
proof for the second case is similar.

(ii)$\Slra$(iii). In the non-strict case this follows from \eqref{e:equis} and Lemma \ref{l:farkas}. In the strict
case this is a known fact; e.g., see \cite[Lemma 3.6.5]{web94}.
\end{proof}

We now derive a few consequences of the above theorem. A given vector $v\in\R^d$ defines a hyperplane by
$$
H(v) = \{y\in\R^d: \ip{v}{y} = 0\},
$$
which itself determines two open convex cones $H^-(v)$ and $H^+(v)$, defined by
\begin{equation*}
H^-(v) = \{y\in\R^d : \ip{v}{y} < 0\}
\quad\text{and}\quad
H^{+}(v) = \{y\in\R^d : \ip{v}{y} > 0\}.
\end{equation*}
Using these notations we can restate the equivalence (i)$\Slra$(iii) in Theorem~\ref{poly} as follows:

\begin{prop}\label{p:polycoro}
Let $M\geq N\geq 2$, and let $m$ be such that $N\leq m\leq M$. Then a frame $\Phi = \{\varphi_k\}_{k=1}^M$ for
$\R^N$ is $m$-scalable if and only if there exists a subset $I\subset [M]$ with $\#I = m$ such that
$\bigcap_{i\in I} H^{+}(F(\varphi_i)) = \emptyset.$
\end{prop}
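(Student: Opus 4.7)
The plan is to note that Proposition \ref{p:polycoro} is essentially a direct restatement, in the language of open half-spaces, of the non-strict version of the equivalence (i)$\Slra$(iii) already established in Theorem \ref{poly}. So the proof reduces to unpacking the definition of $H^+(v)$ and no new combinatorics or convex-geometric machinery is required.

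First I would invoke Theorem \ref{poly}: the frame $\Phi$ is $m$-scalable if and only if there exists $I\subset [M]$ with $\#I = m$ such that there is no $h\in\R^d$ satisfying $\ip{F(\varphi_k)}{h} > 0$ for every $k\in I$. Fix such an index set $I$ (or show, in the other direction, that the $I$ produced by the half-space condition works).

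Next I would translate the non-existence condition via the definition $H^+(F(\varphi_i)) = \{y\in\R^d : \ip{F(\varphi_i)}{y} > 0\}$. By this definition, a vector $h\in\R^d$ lies in $\bigcap_{i\in I} H^+(F(\varphi_i))$ precisely when $\ip{F(\varphi_i)}{h} > 0$ for every $i\in I$. Hence the absence of such an $h$ is exactly the assertion $\bigcap_{i\in I} H^+(F(\varphi_i)) = \emptyset$, and the two equivalences can be chained to conclude.

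Since all the substantive content (in particular, the application of Farkas' lemma) has been absorbed into Theorem \ref{poly}, there is no real obstacle here: the proposition is a cosmetic rephrasing whose sole purpose is to render the geometric picture of $m$-scalability transparent. The only point worth verifying carefully is that the intersection ranges over $i\in I$ (and not over all of $[M]$), so that the witness set from Theorem \ref{poly}(iii) transfers correctly to the witness set for the empty intersection.
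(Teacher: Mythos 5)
Your proposal is correct and matches the paper exactly: the paper states Proposition \ref{p:polycoro} as a direct restatement of the non-strict part of the equivalence (i)$\Leftrightarrow$(iii) in Theorem \ref{poly}, obtained by unpacking the definition of $H^{+}(F(\varphi_i))$, which is precisely your argument.
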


\begin{rem}
In the case of strict $m$-scalability we have the following necessary condition: If $\Phi$ is strictly $m$-scalable,
then there exists a subset $I\subset [M]$ with $\#I = m$ such that  $\bigcap_{i\in I} H^-(F(\varphi_i)) = \emptyset.$
\end{rem}

\begin{rem}
When $M\geq d+1 = N(N+1)/2$, we can use properties of the convex sets $H^{\pm}(F(\varphi_k))$ to give an alternative
proof of Corollary~\ref{highredund}. For this, let the frame $\Phi = \{\varphi_k\}_{k=1}^M$ for $\R^N$ be scalable.
Then, by Proposition \ref{p:polycoro} we have that $\bigcap_{k=1}^MH^{+}(F(\varphi_k)) = \emptyset$. Now, Helly's
theorem (see, e.g., \cite[Theorem 1.3.2]{matou02}) implies that there exists $I\subset [M]$ with $\# I = d+1$ such
that $\bigcap_{i\in I}H^{+}(F(\varphi_i)) = \emptyset$. Exploiting Proposition \ref{p:polycoro} again, we conclude
that $\Phi$ is $(d+1)$-scalable.
\end{rem}

The following result is an application of Proposition \ref{p:polycoro} which provides a simple condition for $\Phi\notin\sca(M,N)$.

\begin{prop}\label{firstoctant}
Let $\Phi = \{\varphi_k\}_{k=1}^M$ be a frame for $\R^N$, $N\ge 2$. If there exists an isometry $T$ such that
$T(\Phi)\subset\R^{N-2}\times\R^{2}_{+}$, then $\Phi$ is not scalable. In particular, $\Phi$ is not scalable if there
exist $i,j\in [N]$, $i\neq j$, such that $\vphi_k(i)\vphi_k(j) > 0$ for all $k\in [M]$.
\end{prop}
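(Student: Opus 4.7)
The plan is to use Proposition \ref{p:polycoro} in the case $m=M$, $I=[M]$: since $\sca(M,N,M)=\sca(M,N)$, a frame $\Phi$ fails to be scalable precisely when there exists some $h\in\R^d$ with $\langle F(\varphi_k),h\rangle>0$ for every $k\in[M]$, i.e., when $\bigcap_{k=1}^M H^+(F(\varphi_k))\neq\emptyset$. So the whole task is to produce such a test vector $h$.

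I would first dispatch the \emph{in particular} clause, which is the heart of the argument. Inspecting the block definition of the map $F$, the monomial $x_i x_j$ (for $i<j$) appears as one of the entries of $F_i(x)$, hence as some fixed coordinate $\ell\in[d]$ of $F(x)$. Taking $h:=e_\ell$, the standard basis vector of $\R^d$, gives $\langle F(\varphi_k),h\rangle=\varphi_k(i)\varphi_k(j)>0$ for every $k$ by hypothesis, so $h\in\bigcap_{k=1}^M H^+(F(\varphi_k))$ and Proposition \ref{p:polycoro} rules out scalability. For the first assertion I would then invoke the orthogonal invariance of scalability recorded right after Definition \ref{def1}: an isometry of $\R^N$ is an orthogonal linear map, so $\Phi\in\sca$ iff $T(\Phi)\in\sca$, and I may simply replace $\Phi$ by $T(\Phi)$ and assume $\varphi_k\in\R^{N-2}\times\R^2_+$ for every $k$. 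Then $\varphi_k(N-1)>0$ and $\varphi_k(N)>0$, and the in-particular clause with $i=N-1$, $j=N$ closes the argument.

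The only subtlety to watch is the convention for $\R^2_+$: for this strategy to succeed, $\R^2_+$ must be read as the open positive quadrant $(0,\infty)^2$, because if zero entries were allowed the product $\varphi_k(N-1)\varphi_k(N)$ could vanish for some $k$ and the test functional $h=e_\ell$ would yield only $\langle F(\varphi_k),h\rangle\geq 0$ rather than the strict inequality demanded by Proposition \ref{p:polycoro}. Beyond that, the proof is essentially the bookkeeping observation that every off-diagonal quadratic monomial $x_i x_j$ is literally a coordinate of $F$, so the obstruction hyperplane can be taken as a coordinate hyperplane in $\R^d$.
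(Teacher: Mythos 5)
Your proof is correct and is essentially the paper's own argument: reduce via orthogonal invariance to $\Phi\subset\R^{N-2}\times\R^2_+$ and exhibit a coordinate vector $h=e_\ell$ (the paper takes $e_d$, whose coordinate of $F$ is $x_{N-1}x_N$) lying in $\bigcap_k H^+(F(\varphi_k))$, which contradicts scalability by Proposition~\ref{p:polycoro}. The only difference is cosmetic — you prove the ``in particular'' clause first and deduce the main claim from it, while the paper argues in the opposite order — and your remark that $\R^2_+$ must be read as the open quadrant matches the strict inequality the paper implicitly uses.
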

\begin{proof}
Without loss of generality, we may assume that $\Phi\subset\R^{N-2}\times\R^{2}_{+}$, cf.\ \cite[Corollary 2.6]{kopt12}.
Let $\{e_k\}_{k=1}^{d}$ be the standard ONB for $\R^d$. Then for each $k\in [M]$ we have that
$$
\ip{e_d}{F(\varphi_{k})} = \varphi_{k}(N-1)\varphi_{k}(N) >0.
$$
Hence, $e_d\in\bigcap_{i\in [M]}H^+(F(\vphi_i))$. By Proposition \ref{p:polycoro}, $\Phi$ is not scalable.
\end{proof}

The characterizations stated above can be recast in terms of the convex cone $C(F(\Phi))$ generated by $F(\Phi)$. We
state this result for the sake of completeness. But first, recall that for a finite subset $X = \{x_1,\ldots,x_M\}$
of $\R^d$ the \emph{polyhedral cone generated by $X$} is the closed convex cone $C(X)$ defined by
$$
C(X) = \left\{\sum_{i=1}^{M} \alpha_{i} x_i : \alpha_i\geq 0\right\}.
$$
Let $C$ be a cone in $\R^d$. The \emph{polar cone} of $C$ is the  closed convex cone $C^{\circ}$ defined by
$$
C^{\circ} := \{x\in\R^N : \ip{x}{y}\leq 0\,\,{\textrm for \, all}\,\,y\in C\}.
$$
The cone $C$ is said to be \emph{pointed} if $C\cap (-C) = \{0\},$ and \emph{blunt} if the linear space generated
by $C$ is $\R^N$, i.e.\ $\linspan C = \R^N$.

\begin{cor}\label{mainNd1}
Let $\Phi = \{\varphi_k\}_{k=1}^M\in\calF^*$, and let $N\leq m\leq M$ be fixed. Then the following conditions are equivalent:
\begin{enumerate}
\item[{\rm (i)}]   $\Phi$ is strictly $m$-scalable .
\item[{\rm (ii)}]  There exists $I \subset [M]$ with $\#I = m$ such that $C(F(\Phi_{I}))$ is not pointed.
\item[{\rm (iii)}] There exists $I \subset [M]$ with $\#I = m$ such that $C(F(\Phi_{I}))^{\circ}$ is not blunt.
\item[{\rm (iv)}]  There exists $I \subset [ M]$ with $\#I = m$ such that the interior of $C(F(\Phi_{I}))^{\circ}$ is  empty.
\end{enumerate}
\end{cor}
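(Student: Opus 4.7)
The argument splits into three parts: the equivalences (ii)$\Slra$(iii)$\Slra$(iv) follow from standard polar-cone duality applied to the polyhedral cone $C := C(F(\Phi_I))$, while (i)$\Slra$(ii) is obtained by re-interpreting the strict case of Theorem~\ref{poly} in cone-theoretic language.

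For (ii)$\Slra$(iii), I would invoke the standard duality identity $\linspan C^{\circ} = (C \cap (-C))^{\perp}$, valid for any closed convex cone in $\R^d$. From this, $C^{\circ}$ is blunt iff $\linspan C^{\circ} = \R^d$ iff $C \cap (-C) = \{0\}$ iff $C$ is pointed; the contrapositive gives (ii)$\Slra$(iii). For (iii)$\Slra$(iv), I would use the fact that a closed convex set in $\R^d$ has non-empty interior iff its affine hull equals $\R^d$. Since $0 \in C^{\circ}$, the affine hull coincides with the linear span, so $\mathrm{int}(C^{\circ}) \neq \emptyset$ iff $C^{\circ}$ is blunt; the contrapositive gives (iii)$\Slra$(iv).

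For (i)$\Slra$(ii), I would combine the strict case of Theorem~\ref{poly} with Farkas' lemma. The forward direction is direct: strict $m$-scalability yields $I \subset [M]$ with $\#I = m$ and strictly positive $c = (c_i)_{i \in I}$ satisfying $\sum_{i \in I} c_i F(\varphi_i) = 0$. Then for every $k \in I$ one has $-F(\varphi_k) = \sum_{i \neq k}(c_i/c_k) F(\varphi_i) \in C(F(\Phi_I))$, and the assumption $\Phi \in \calF^*$ forces $F(\varphi_k) \neq 0$, so $C(F(\Phi_I)) \cap (-C(F(\Phi_I))) \neq \{0\}$, i.e.\ the cone is not pointed. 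For the converse, Theorem~\ref{poly}(iii) (strict case) asserts that strict $m$-scalability with witness $I$ is equivalent to the non-existence of $h \in \R^d$ with $\ip{F(\varphi_k)}{h} \geq 0$ on $I$ and at least one strict inequality; this is precisely the Farkas-dual formulation of $C(F(\Phi_I))$ being non-pointed.

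The main obstacle will be the converse (ii)$\Lra$(i): translating ``not pointed'' (mere existence of a line in $C(F(\Phi_I))$) back into strict positivity of all $m$ weights. This requires threading the strict Farkas alternative (Lemma~\ref{l:farkas}) through Theorem~\ref{poly}(iii) while keeping the index set $I$ fixed on both sides, so that a generic element of the lineality space $C \cap (-C)$ can be promoted to a witnessing combination with all $m$ coefficients strictly positive.
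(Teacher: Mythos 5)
Your handling of (ii)$\Slra$(iii)$\Slra$(iv) is correct and is in substance the paper's own argument: where the paper quotes the polarity facts from Stoer--Witzgall ($C^{\circ\circ}=C$ and the pointed/blunt duality), you use the equivalent identity $\linspan C^{\circ}=(C\cap(-C))^{\perp}$, and both proofs of (iii)$\Slra$(iv) reduce to ``nonempty interior iff affine hull $=$ linear span (as $0\in C^{\circ}$) $=\R^d$''. Your direct argument for (i)$\Sra$(ii) is also fine, and in fact more self-contained than the paper's (which obtains both directions of (i)$\Slra$(ii) from a citation); note only that it uses $F(\vphi_k)\neq 0$, which does follow from $\vphi_k\neq 0$ when $N\ge 2$.

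The genuine gap is the converse (ii)$\Sra$(i), exactly the point you flag at the end, and your proposed fix does not work: non-pointedness of $C(F(\Phi_I))$ is \emph{not} Farkas-dual to the strict condition in Theorem~\ref{poly}(iii). For a cone with nonzero generators, ``not pointed'' is equivalent (via Lemma~\ref{l:farkas}, i.e.\ the Gordan alternative) to the non-existence of $h$ with $\ip{F(\vphi_k)}{h}>0$ for all $k\in I$ --- that is, to the \emph{non-strict} case of Theorem~\ref{poly}, $0\in\co(F(\Phi_I))$, hence to scalability of $\Phi_I$, not strict scalability. The Stiemke-type condition you invoke (no $h$ with $\ip{F(\vphi_k)}{h}\ge 0$ on $I$ and at least one strict inequality) is equivalent to the strictly stronger property that $C(F(\Phi_I))$ is a linear subspace, equivalently $0\in ri\,\co(F(\Phi_I))$. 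A half-plane shows the difference: for $x_1=(1,0)$, $x_2=(-1,0)$, $x_3=(0,1)$ in $\R^2$ the generated cone is not pointed, yet $h=(0,1)$ satisfies $\ip{x_k}{h}\ge 0$ with one strict inequality, and there is no strictly positive vanishing combination. From non-pointedness you only obtain a nonzero nonnegative $u\in\ker F(\Phi_I)$; restricting to its support yields strict $k$-scalability for some $k\le m$, and strict $k$-scalability does not in general upgrade to strict $m$-scalability (cf.\ Proposition~\ref{sframedplus1}). The paper does not derive (i)$\Slra$(ii) from Theorem~\ref{poly} at all: it goes through Proposition~\ref{p:F} and cites \cite[Lemma 2.10.9]{stwi70} for the equivalence between non-pointedness and the existence of a nonnegative $u\in\ker F(\Phi_I)\setminus\{0\}$ with $\|u\|_0=m$ --- precisely the bridge your Farkas-based route cannot supply, and, as the example above suggests, a step that genuinely requires more than the standard Farkas/Gordan/Stiemke alternatives and deserves careful scrutiny in its own right.
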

\begin{proof}
(i)$\Slra$(ii). By Proposition \ref{p:F}, $\Phi$ is strictly $m$-scalable if and only if there exist $I\subset [M]$
with $\#I = m$ and a nonnegative $u\in\ker F(\Phi_I)\setminus\{0\}$ with $\|u\|_0 = m$. By~\cite[Lemma 2.10.9]{stwi70},
this is equivalent to the cone $C(F(\Phi_I))$ being not pointed. This proves that (i) is equivalent to (ii).

(ii)$\Slra$(iii). This follows from the fact that the polar of a pointed cone $C$ is blunt and vice versa, as long as
$C^{\circ \circ}=C$, see \cite[Theorem 2.10.7]{stwi70}. But in our case $C(F(\Phi_I))^{\circ \circ}=C(F(\Phi_I))$, see
\cite[Lemma 2.7.9]{stwi70}.

(iii)$\Sra$(iv). If $C(F(\Phi_I))^\circ$ is not blunt, then it is contained in a proper hyperplane of $\R^d$ whose
interior is empty. Hence, also the interior of $C(F(\Phi_I))^\circ$ must be empty.

(iv)$\Sra$(iii). We use a contra positive argument. Assume that $C(F(\Phi))^{\circ}$ is blunt. This is equivalent
to $\linspan C(F(\Phi))^{\circ} = \R^d$. But for the nonempty cone $C(F(\Phi))^{\circ}$ we have $\aff(C(F(\Phi))^{\circ})
= \linspan C(F(\Phi))^{\circ}$. Hence, $\aff(C(F(\Phi))^{\circ})=\R^d$, and so the relative interior of $C(F(\Phi))^{\circ}$
is equal to its interior, which therefore is nonempty.
\end{proof}

The main idea of the previous results is the characterization of ($m$-)scalability of $\Phi$ in terms of properties of
the convex polytopes $\co(F(\Phi_I))$. However, it seems more ``natural'' to seek  assumptions on the convex polytopes $\co(\Phi_I)$ that will ensure that $\co(F(\Phi))$ satisfy  the conditions in Theorem~\ref{poly} 
hold. 
Proposition~\ref{firstoctant}, which gives a condition on $\Phi$ that precludes it to be scalable, is a step in this direction.

Nonetheless, we address the related question of whether $F(\Phi)$ is a frame for $\R^d$ whenever $\Phi$ is a scalable frame for
$\R^N$. This depends clearly on the redundancy of $\Phi$ as well as on the map $F$. In particular, we finish this section
by giving a condition which ensures that  $F(\Phi)$ is always a frame for $\R^d$ when $M\geq d+1$. In order to prove this
result, we need a few preliminary facts.

For $x=(x_k)_{k=1}^{N}\in \R^N$ and $h=(h_k)_{k=1}^d \in \R^d$, we have that
\begin{equation}\label{poly2}
\ip{F(x)}{h} = \sum_{\ell=2}^{N}h_{\ell-1}(x_1^2 - x_{\ell}^2) + \sum_{k=1}^{N-1}\sum_{\ell=k+1}^{N}h_{k(N - 1 - (k-1)/2) +\ell - 
1}x_{k}x_{\ell}.
\end{equation}
The right hand side of \eqref{poly2} is obviously a homogeneous polynomial of degree $2$ in $x_1, x_2, \hdots, x_N$
. We shall denote the set of all polynomials of this form by $\boldsymbol{P}_{2}^N$. It is easily seen that $\boldsymbol{P}_{2}^N$ is isomorphic to the  subspace of real symmetric $N\times N$ matrices whose trace is $0$. Indeed, for each $N\geq 2$, and each $p\in\boldsymbol{P}_{2}^N$,
$$
p(x) = \sum_{\ell=2}^{N}a_{\ell-1}(x_1^2 - x_{\ell}^2) + \sum_{k=1}^{N-1}\sum_{\ell=k+1}^{N}a_{k(N - (k+1)/2) +\ell - 1}x_{k}x_{\ell},
$$
we have $p(x) = \<Q_px,x\>$, where $Q_p$ is the symmetric $N\times N$-matrix with entries
$$
Q_{p}(1,1) = \sum_{k=1}^{N-1}a_k, \qquad Q_{p}(\ell,\ell) = -a_{\ell-1}\quad\text{for }\ell = 2,3,\ldots,N
$$
and
$$
Q_{p}(k,\ell) = \frac 1 2 a_{k(N - (k+1)/2) +\ell - 1}\quad\text{for }k = 1,\ldots,N-1,\;\ell = k+1,\ldots,N.
$$
In particular, the dimension of $\boldsymbol{P}_{2}^N$ is $d=(N+2)(N-1)/2$.
%
%

\begin{prop}\label{dimpoly}
Let $M\geq d+1$ where $d=(N-1)(N+2)/2$, and $\Phi = \{\varphi_k\}_{k=1}^M\in\sca_{+}(d+1)\setminus\sca(d)$. Then 
$F(\Phi)$ is a frame for $\R^d$.
\end{prop}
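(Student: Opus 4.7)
My approach is to show that already the $d+1$ vectors $\{F(\varphi_i)\}_{i\in I}$ span $\R^d$, where $I\subset[M]$ is a fixed subset of size $d+1$ furnished by the strict $(d+1)$-scalability hypothesis. Since $F(\Phi)\supset\{F(\varphi_i)\}_{i\in I}$, this forces $F(\Phi)$ to span $\R^d$ and hence to be a frame (in finite dimensions, a spanning family is automatically a frame). I will argue by contrapositive: if these $d+1$ vectors fail to span, I will produce a non-negative, non-zero vector in $\ker F(\Phi)$ with at most $d$ non-zero entries, which by Proposition~\ref{p:F} would place $\Phi$ in $\sca(d)$, contradicting the hypothesis.

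The first step is to extract the relevant kernel vectors. Because $\Phi\in\sca_+(d+1)$, Definition~\ref{def1} yields $I\subset[M]$ with $\#I=d+1$ such that $\Phi_I$ is strictly scalable, and Proposition~\ref{p:F} then supplies a strictly positive $u\in\ker F(\Phi_I)$. Viewing $F(\Phi_I)$ as a $d\times(d+1)$ matrix, suppose for contradiction that $\rank F(\Phi_I)<d$. By rank--nullity, $\dim\ker F(\Phi_I)\geq 2$, so I can choose $v\in\ker F(\Phi_I)$ linearly independent of $u$.

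The second step is a standard ``slide along the kernel''. Consider $T:=\{t\in\R : u+tv\geq 0\}$, a closed interval containing $0$ in its interior since $u$ is strictly positive. If $T=\R$, letting $t\to\pm\infty$ would force both $v\geq 0$ and $-v\geq 0$, giving $v=0$, a contradiction. Replacing $v$ by $-v$ if necessary, I therefore have $t_*:=\sup T<\infty$, and I set $w:=u+t_*v\in\ker F(\Phi_I)$. By continuity together with the definition of $t_*$, the vector $w$ is non-negative and has at least one vanishing coordinate, so $\|w\|_0\leq d$. Crucially, $w\neq 0$: otherwise $v=-t_*^{-1}u$ would be a scalar multiple of $u$, contradicting linear independence. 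Extending $w$ by zero on $[M]\setminus I$ yields a non-negative, non-zero element of $\ker F(\Phi)$ with at most $d$ non-zero entries, so Proposition~\ref{p:F} gives $\Phi\in\sca(d)$, the contradiction I sought.

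The only delicate point is ruling out $w=0$ at the endpoint of the slide; this is exactly where the hypothesis $\dim\ker F(\Phi_I)\geq 2$ (and hence the existence of $v$ linearly independent of $u$) is used. Everything else is a direct exploitation of the kernel reformulation of $m$-scalability in Proposition~\ref{p:F}.
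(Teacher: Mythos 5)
Your argument is correct, but it is a genuinely different route from the paper's. You work entirely on the kernel side of $F(\Phi_I)$: strict $(d+1)$-scalability gives a strictly positive $u\in\ker F(\Phi_I)$, a hypothetical rank deficiency of the $d\times(d+1)$ matrix $F(\Phi_I)$ gives a second, independent kernel vector $v$ by rank--nullity, and the slide $u+tv$ to the boundary of the nonnegative orthant produces a nonnegative, nonzero kernel vector with a vanishing coordinate, hence $\|w\|_0\le d$, contradicting $\Phi\notin\sca(d)$ via Proposition~\ref{p:F}. (The one delicate point, $w\neq 0$, is indeed secured because $0$ lies in the interior of $T$, so $t_*>0$ and $w=0$ would force $v\in\linspan\{u\}$.) The paper argues instead on the orthogonal complement: a vector $h$ with $\ip{F(\varphi_k)}{h}=0$ for all $k\in I$ is identified, via the polynomial $p_h$ and the isomorphism $p\mapsto Q_p$, with a symmetric matrix $Q_{p_h}$ that is Hilbert--Schmidt orthogonal to every $\varphi_k\varphi_k^T$, $k\in I$; then $\Phi\notin\sca(d)$ together with Proposition~\ref{p:less_vectors} (which rests on Carath\'eodory's theorem and Corollary~\ref{c:proj}) forces $\{\varphi_k\varphi_k^T:k\in I\}$ to be a basis of $S_N$, whence $Q_{p_h}=0$ and $h=0$. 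Your version is more elementary and self-contained, needing only Proposition~\ref{p:F}; in effect your boundary-slide re-proves inline the Carath\'eodory-type reduction that the paper channels through Proposition~\ref{p:less_vectors}. The paper's version buys a structural byproduct that your proof does not exhibit explicitly, namely that under the hypotheses the rank-one matrices $\{\varphi_k\varphi_k^T\}_{k\in I}$ are linearly independent and span $S_N$, which ties the proposition to the matrix-space viewpoint of Section~\ref{sec2}.
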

\begin{proof}
%
Let $I\subset [M]$, $\#I=d+1$, be an index set such that  $\Phi_I$ is strictly scalable. Assume that there exists $h\in\R^d$ such that $\<F(\varphi_k),h\> = 0$ for each $k\in I$. By \eqref{poly2} we conclude that $p_h(\vphi_k) = 0$ for all $k\in I$, where $p_h$ is the polynomial in $\boldsymbol{P}_{2}^N$ on the right hand side of \eqref{poly2}. Hence $\<Q_{p_h}\vphi_k,\vphi_k\> = 0$ for all $k\in I$. Now, we have
\begin{equation}\label{HS}
\<\vphi_k\vphi_k^T,Q_{p_h}\>_{HS} = \Tr(\vphi_k\vphi_k^TQ_{p_h}) = \<Q_{p_h}\vphi_k,\vphi_k\> = 0\quad\text{for all }k\in I.
\end{equation}
But as $\Phi_I$ is not $d$-scalable (otherwise, $\Phi\in\sca(d)$) it is not $m$-scalable for every $m\le d$. Thus, Proposition \ref{p:less_vectors} yields that
$$
\dim\linspan\{\vphi_k\vphi_k^T : k\in I\} = d+1.
$$
Equivalently, $\{\vphi_k\vphi_k^T : k\in I\}$ is a basis of the $(d+1)$-dimensional space $S_N$. Therefore, from \eqref{HS} we conclude that $Q_{p_h} = 0$ which implies $p_h = 0$ (since $p\mapsto Q_p$ is an isomorphism) and thus $h = 0$.

Now, it follows that $F(\Phi_I)$ spans $\R^d$ which is equivalent to $F(\Phi_I)$ being a frame for $\R^d$. Hence, so is $F(\Phi)$.
\end{proof}


\section{Topology of the Set of Scalable Frames}
\label{sec3}

In this section, we present some topological features of the set $\sca(M,N)$. Hereby, we identify frames in $\calF(M,N)$
with real $N\times M$-matrices as we already did before, see, e.g., \eqref{e:scfrm} in subsection \ref{subsec2.3}. Hence,
we consider $\calF(M,N)$ as the set of all matrices in $\R^{N\times M}$ of rank $N$. Note that under this identification
the order of the vectors in a frame becomes important. However, it allows us to endow $\fr(M,N)$ with the usual Euclidean
topology of $\R^{N\times M}$.

In \cite{kopt12} it was proved that $\sca(M,N)$ is a closed set in $\calF(M,N)$ (in the relative topology of $\calF(M,N)$).
The next proposition refines this fact.

\begin{prop}\label{p:closedness}
Let $M\geq m\geq N\geq 2$. Then $\sca(M,N,m)$ is closed in $\calF(M,N)$.
\end{prop}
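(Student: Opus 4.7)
The plan is to invoke the convex-geometric characterization provided by Theorem~\ref{poly}: a frame $\Phi$ is $m$-scalable if and only if there exists $I \subset [M]$ with $\#I = m$ such that $0 \in \co(F(\Phi_I))$. This allows writing
$$
\sca(M,N,m) \;=\; \bigcup_{\substack{I \subset [M] \\ \#I = m}} S_I, \qquad S_I := \{\Phi \in \calF(M,N) : 0 \in \co(F(\Phi_I))\}.
$$
Since there are only $\binom{M}{m}$ such index sets $I$ and a finite union of closed sets is closed, the problem reduces to showing that each $S_I$ is closed in the relative topology of $\calF(M,N)$.

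For a fixed $I$ with $\#I = m$, I would take a sequence $(\Phi^{(n)})_{n \in \N} \subset S_I$ converging to some $\Phi \in \calF(M,N)$ and argue that $\Phi \in S_I$. For each $n$, the condition $0 \in \co(F(\Phi_I^{(n)}))$ produces nonnegative scalars $\la_i^{(n)}$, $i \in I$, with $\sum_{i \in I} \la_i^{(n)} = 1$ and $\sum_{i \in I} \la_i^{(n)} F(\varphi_i^{(n)}) = 0$. The coefficient tuples $(\la_i^{(n)})_{i \in I}$ lie in the standard $(m-1)$-simplex, which is compact, so after passing to a subsequence I may assume $\la^{(n)} \to \la$ with $\la$ still in that simplex. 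Each coordinate of $F \colon \R^N \to \R^d$ is a homogeneous polynomial of degree $2$, so $F$ is continuous; combined with the coordinatewise convergence $\varphi_i^{(n)} \to \varphi_i$, this yields $F(\varphi_i^{(n)}) \to F(\varphi_i)$ for every $i \in I$. Passing to the limit in $\sum_{i \in I} \la_i^{(n)} F(\varphi_i^{(n)}) = 0$ gives $\sum_{i \in I} \la_i F(\varphi_i) = 0$, hence $0 \in \co(F(\Phi_I))$ and $\Phi \in S_I$.

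The heart of the argument is thus Theorem~\ref{poly}, which repackages the non-linear tight-frame scaling equation \eqref{e:scfrm} as the linear/convex condition ``$0$ lies in a prescribed convex hull.'' Once that characterization is available, the closedness step is essentially routine: compactness of the simplex of convex-combination weights together with the continuity of the quadratic map $F$ allow the limit to be taken term by term. The place one might expect delicate analysis --- controlling weights $c_i^{(n)}$ that could blow up as some frame vectors $\varphi_i^{(n)}$ shrink --- is sidestepped, because the polytope $\co(F(\Phi_I))$ built into Theorem~\ref{poly} has already normalized the weights into a bounded simplex. The only step requiring any care is the subsequence extraction in that simplex, which is standard.
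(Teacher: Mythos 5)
Your proof is correct, but it runs through the other facet of Theorem~\ref{poly} than the paper does. You use the primal condition (ii), $0\in\co(F(\Phi_I))$, decompose $\sca(M,N,m)$ as the finite union of the sets $S_I$ over the $\binom{M}{m}$ index sets, and prove each $S_I$ relatively closed by a standard compactness argument: the convex weights live in the compact standard simplex, $F$ is continuous (its coordinates are quadratic polynomials), so one may pass to the limit along a subsequence. The paper instead works with the dual (Farkas) condition (iii), restated as Proposition~\ref{p:polycoro}: if $\Phi$ is not $m$-scalable, then for every $I$ with $\#I=m$ there is a vector $y_I$ with $\min_{k\in I}\ip{y_I}{F(\varphi_k)}>0$, and continuity of $F$ shows this strict separation persists under a sufficiently small perturbation simultaneously for all (finitely many) $I$; hence the complement of $\sca(m)$ in $\calF$ is open. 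The two arguments are dual to each other and both lean on the same characterization machinery built from $F$. Yours is arguably the more routine route (sequential closedness plus compactness of the weight simplex, with the boundedness of the weights already built into the convex-hull formulation), while the paper's version yields a slightly more quantitative statement: an explicit $\varepsilon$-neighborhood of a non-$m$-scalable frame consisting entirely of non-$m$-scalable frames, i.e.\ a uniform stability of non-scalability, which is the form most convenient for the nowhere-density discussion that follows. Your handling of the relative topology (limits taken inside $\calF(M,N)$) is also fine.
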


\begin{proof}
We prove the assertion by establishing that the complement $\calF\setminus\sca(m)$ is open, that is, if
$\Phi = \{\varphi_{k}\}_{k=1}^{M}\in \fr$ is a frame which is not $m$-scalable, we prove that there exists $\varepsilon>0$
such that for any collection $\Psi = \{\psi_{k}\}_{k=1}^{M}$ of vectors in $\R^N$ for which
$$
\|\varphi_k - \psi_k\| < \varepsilon\quad\text{for all $k\in [M]$,}
$$
we have that $\Psi$ is a frame which is not $m$-scalable. Thus assume that $\Phi = \{\varphi_k\}_{k=1}^{M}$ is a frame
which is not $m$-scalable and define the finite set $\calI$ of subsets by
$$
\calI := \{I\subset [M] : \#I = m\}.
$$
By Proposition \ref{p:polycoro}, for each $I\in\calI$ there exists $y_I\in\bigcap_{k\in I}H^{+}(F(\varphi_k))$, that is,
$\min_{k\in I}\ip{y_I}{F(\varphi_k)} > 0$. By the continuity of the map $F$, there exists $\varepsilon > 0$ such that
for each $\{\psi_k\}_{k=1}^M\subset\R^N$ with $\|\psi_k - \varphi_k\| < \varepsilon$ for all $k\in [M]$ we still have
$\min_{k\in I}\ip{y_{I}}{F(\psi_k)} > 0$. We can choose $\varepsilon>0$ sufficiently small to guarantee that $\Psi =
\{\psi_k\}_{k=1}^{M} \in \fr$. Again from Proposition \ref{p:polycoro} we conclude that $\Psi$ is not $m$-scalable for
any $N\leq m\leq M$. Hence, $\sca(m)$ is closed in $\fr$.
\end{proof}

The next theorem is the main result of this section. It shows that the set of scalable frames is nowhere dense in the
set of frames unless the redundancy of the considered frames is disproportionately large.

\begin{thm}\label{bdMlessd}
Assume that $2\le N\le M < d+1 = N(N+1)/2$. Then $\sca(M,N)$ does not contain interior points. In other words, for the
boundary of $\sca(M,N)$ we have
$$
\partial\sca(M,N) = \sca(M,N).
$$
\end{thm}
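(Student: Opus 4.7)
The plan is to embed $\sca(M,N)$ into a proper real algebraic subvariety of $\R^{N\times M}$. Since Proposition \ref{p:closedness} already tells us that $\sca(M,N)$ is closed in $\calF(M,N)$ and $\calF(M,N)$ is open in $\R^{N\times M}$, the identity $\partial\sca(M,N)=\sca(M,N)$ is equivalent to $\sca(M,N)$ having empty interior in $\R^{N\times M}$.

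The hypothesis $M<d+1$ means $M\le d$, so the matrix $F(\Phi)\in\R^{d\times M}$ has nontrivial kernel if and only if $\rank F(\Phi)<M$. Combining this with Proposition \ref{p:F} yields
\[
\sca(M,N)\subset V:=\{\Phi\in\R^{N\times M} : \rank F(\Phi)<M\}.
\]
Since the entries of $F(\Phi)$ are quadratic polynomials in the entries of $\Phi$, the set $V$ is cut out by the vanishing of all $M\times M$ minors of $F(\Phi)$, hence $V$ is a real algebraic subvariety of $\R^{N\times M}$.

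The substantive step is to verify that $V$ is a \emph{proper} subvariety, equivalently to exhibit one $\Phi_0\in\R^{N\times M}$ with $\rank F(\Phi_0)=M$. For this I will use that the $d$ polynomial components of $F$---namely $x_1^2-x_\ell^2$ for $\ell=2,\dots,N$ together with $x_k x_\ell$ for $1\le k<\ell\le N$---are linearly independent in the space of homogeneous degree-two polynomials in $x_1,\dots,x_N$; via the isomorphism $p\mapsto Q_p$ used just before Proposition \ref{dimpoly} they correspond to a basis of the trace-zero subspace of $S_N$. Consequently no nonzero $h\in\R^d$ can satisfy $\langle h,F(x)\rangle\equiv 0$, so the image of $F:\R^N\to\R^d$ spans $\R^d$. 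An easy induction then selects $\varphi_1^0,\dots,\varphi_M^0\in\R^N$ (with $M\le d$) whose $F$-images are linearly independent, producing the required $\Phi_0$.

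Once $V\ne\R^{N\times M}$ is established, the rest is standard: any polynomial vanishing on a nonempty open subset of $\R^{N\times M}$ is identically zero, so a proper algebraic subvariety has empty interior. Hence $\sca(M,N)\subset V$ has empty interior in $\R^{N\times M}$, and the reduction in the first paragraph concludes the argument. The only genuine obstacle is the construction of $\Phi_0$, which reduces cleanly to the linear independence of the polynomial components of $F$, a fact visible directly from their coefficients.
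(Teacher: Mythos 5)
Your proposal is correct, and it takes a genuinely different route from the paper. The paper argues by contradiction at a hypothetical interior point: using Lemma \ref{l:dense} it may assume the interior point $\Phi$ satisfies $\dim\linspan X_\Phi = M$ (here $M<N(N+1)/2$), then it picks, via Lemma \ref{l:not_all}, a unit vector $\varphi_0$ with $\varphi_0\varphi_0^T\notin\linspan X_\Phi$ and perturbs one frame vector by $\delta\varphi_0$; comparing the two scaling identities forces the corresponding weights to vanish and contradicts linear independence of the $\varphi_i\varphi_i^T$. You instead use the reformulation of Proposition \ref{p:F}: for a scalable frame $\ker F(\Phi)\neq\{0\}$, so with $M\le d$ one gets $\sca(M,N)\subset\{\Phi:\rank F(\Phi)<M\}$, a set cut out by the $M\times M$ minors of $F(\Phi)$, which are polynomials in the entries of $\Phi$. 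The only substantive point is properness of this variety, and your argument for it is sound: the $d$ components of $F$ (the polynomials $x_1^2-x_\ell^2$ and $x_kx_\ell$) are linearly independent, so $\ip{h}{F(\cdot)}\equiv 0$ forces $h=0$, hence $F(\R^N)$ spans $\R^d$, and a greedy selection of $M\le d$ vectors gives $\Phi_0$ with $\rank F(\Phi_0)=M$; a nonzero polynomial cannot vanish on a nonempty open set, so the variety, and a fortiori $\sca(M,N)$, has empty interior (equivalently empty interior relative to $\calF(M,N)$, which is open, and with closedness this yields $\partial\sca=\sca$). What your approach buys is brevity and a clear conceptual reason (a determinantal variety is thin); it bypasses Lemmas \ref{l:not_all} and \ref{l:dense} entirely, and the hypothesis $M<N(N+1)/2$ enters exactly where it must, namely in making $\rank F(\Phi)=M$ achievable. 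What the paper's route buys is that it stays with the rank-one matrices $\varphi_i\varphi_i^T$ (so it needs no appeal to polynomial genericity beyond elementary linear algebra), and its Lemma \ref{l:dense} is a genericity statement of independent interest, also used elsewhere in the literature on scalable frames.
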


For the proof of Theorem \ref{bdMlessd} we shall need two lemmas. Recall that for a frame $\Phi = \{\varphi_k\}_{k=1}^M\in\fr$
we use the notation
$$
X_{\Phi} = \{\varphi_i\varphi_i^T : i\in [M]\}.
$$

\begin{lem}\label{l:not_all}
Let $\{\varphi_k\}_{k=1}^M\subset\R^N$ be such that $\dim\linspan X_{\Phi} < \frac{N(N+1)}{2}$. Then there exists
$\varphi_0\in\R^N$ with $\|\varphi_0\| = 1$ such that $\varphi_0\varphi_0^T\notin\linspan X_\Phi $.
\end{lem}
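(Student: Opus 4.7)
The plan is a direct contradiction argument based on the fact that rank-one symmetric matrices span all of $S_N$. Let $L := \linspan X_\Phi \subset S_N$, which by hypothesis is a proper linear subspace of $S_N$ (since $\dim S_N = N(N+1)/2$). I would assume for contradiction that every unit vector $\varphi_0 \in \R^N$ satisfies $\varphi_0\varphi_0^T \in L$, and show this forces $L = S_N$.

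First I would remove the unit-norm restriction by scaling: for any nonzero $\varphi \in \R^N$, writing $\varphi = \|\varphi\|\widetilde{\varphi}$ with $\widetilde{\varphi}$ a unit vector, we have $\varphi\varphi^T = \|\varphi\|^2\,\widetilde{\varphi}\widetilde{\varphi}^T \in L$, since $L$ is closed under scalar multiplication. Hence, under the assumption, $vv^T \in L$ for all $v \in \R^N$.

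Next I would invoke the spectral theorem: any $A \in S_N$ admits a decomposition $A = \sum_{i=1}^N \lambda_i v_i v_i^T$ with $v_i$ orthonormal eigenvectors and $\lambda_i \in \R$ the corresponding eigenvalues. Since each $v_iv_i^T \in L$ and $L$ is a linear subspace, we conclude $A \in L$. Therefore $L = S_N$, contradicting the hypothesis $\dim L < N(N+1)/2$. The desired unit vector $\varphi_0$ must therefore exist.

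There is essentially no obstacle here: the argument is a one-line application of the spectral theorem combined with the subspace property. The only point worth care is ensuring that the conclusion produces a \emph{unit} vector, which is harmless since the negation of the statement (no unit vector works) already implies the stronger statement (no nonzero vector works) by homogeneity of the outer product.
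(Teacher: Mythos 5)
Your argument is correct and is essentially the paper's own proof: assume every unit vector $\varphi_0$ gives $\varphi_0\varphi_0^T\in\linspan X_\Phi$ and use the spectral theorem to conclude $\linspan X_\Phi = S_N$, contradicting the dimension hypothesis. The additional remark on homogeneity is harmless but not needed, since the spectral decomposition already uses only unit (orthonormal) eigenvectors.
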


\begin{proof}
Assume the contrary. Then each rank-one orthogonal projection is an element of $\linspan X_\Phi $. But by the spectral
decomposition theorem every symmetric matrix in $\R^{N\times N}$ is a linear combination of such projections. Hence,
$\linspan X_{\Phi} $ coincides with the linear space $S_N$ of all symmetric matrices in $\R^{N\times N}$. Therefore,
$$
\dim\linspan X_{\Phi}  = \frac{N(N+1)}{2},
$$
which is a contradiction.
\end{proof}

The following lemma shows that for a generic $M$-element set $\Phi=\{\varphi_i\}_{i=1}^M\subset\R^N$ (or matrix in $\R^{N\times M}$,
if the $\varphi_i$ are considered as columns) the subspace $\linspan X_{\Phi}$ has the largest possible dimension.

\begin{lem}\label{l:dense}
Let $D := \min\{M,N(N+1)/2\}$. Then the set
$$
\big\{\Phi\in\R^{N\times M} : \dim\linspan X_{\Phi} = D\big\}
$$
is dense in $\R^{N\times M}$.
\end{lem}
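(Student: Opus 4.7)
The plan is to realize the set in question as the complement of a proper real algebraic subvariety of $\R^{N\times M}$ and then conclude via the standard fact that proper real algebraic subsets of $\R^n$ have empty interior.

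First, I would fix any linear isomorphism $\mathrm{vec}:S_N\to\R^{N(N+1)/2}$ and, for each $\Phi=(\varphi_1\mid\cdots\mid\varphi_M)\in\R^{N\times M}$, form the $\tfrac{N(N+1)}{2}\times M$ matrix $A(\Phi)$ whose $i$-th column is $\mathrm{vec}(\varphi_i\varphi_i^T)$. Then $\mathrm{rank}(A(\Phi))=\dim\linspan X_\Phi$, and the entries of $A(\Phi)$ are quadratic polynomials in the entries of $\Phi$. Since this rank is bounded above by $D$, the equality $\dim\linspan X_\Phi=D$ holds precisely when some $D\times D$ minor of $A(\Phi)$ is non-zero, so the complementary set
\[
V:=\{\Phi\in\R^{N\times M}:\dim\linspan X_\Phi<D\}
\]
is the common zero locus of finitely many polynomial functions of $\Phi$, hence a real algebraic subset of $\R^{N\times M}$.

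Next, I would exhibit one matrix $\Phi_0\notin V$, which will show that $V$ is a \emph{proper} algebraic subset. By the spectral theorem, $S_N=\linspan\{vv^T:v\in\R^N\}$, and explicitly
\[
\mathcal{B}:=\{e_ke_k^T:k\in[N]\}\cup\{(e_i+e_j)(e_i+e_j)^T:1\le i<j\le N\}
\]
is a basis of $S_N$: it has $N+\binom{N}{2}=\tfrac{N(N+1)}{2}=\dim S_N$ elements, and since $(e_i+e_j)(e_i+e_j)^T-e_ie_i^T-e_je_j^T=e_ie_j^T+e_je_i^T$, the span of $\mathcal{B}$ contains the standard symmetric basis of $S_N$. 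Enumerating the underlying generating vectors as $v_1,\ldots,v_{N(N+1)/2}$, I set $\Phi_0:=(v_1\mid\cdots\mid v_D\mid w_{D+1}\mid\cdots\mid w_M)$, where the $w_k\in\R^N$ are arbitrary padding vectors (needed only if $D<M$, i.e.\ $D=N(N+1)/2$). The first $D$ columns already yield $D$ linearly independent elements of $X_{\Phi_0}$, and since $D$ is the maximum possible value of $\dim\linspan X_{\Phi_0}$, equality holds. Therefore $\Phi_0\notin V$.

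Finally, at least one of the $D\times D$ minors of $A(\Phi)$ is a non-zero polynomial, because it does not vanish at $\Phi_0$; by analyticity the zero set of a non-zero polynomial on $\R^{N\times M}$ has empty interior. Since $V$ is contained in that zero set, $V$ is nowhere dense, and hence its complement $\{\Phi:\dim\linspan X_\Phi=D\}$ is open and dense in $\R^{N\times M}$. The only step requiring an actual construction is the exhibition of the rank-one basis $\mathcal{B}$ of $S_N$; everything else is routine Zariski-style reasoning, so I do not expect any serious obstacle.
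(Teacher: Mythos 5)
Your proof is correct, but it takes a genuinely different route from the paper. The paper argues by an explicit inductive perturbation: given $\Phi$ with $k:=\dim\linspan X_\Phi<D$, it invokes Lemma \ref{l:not_all} to find a unit vector $\varphi_0$ with $\varphi_0\varphi_0^T\notin\linspan X_\Phi$, replaces one frame vector $\varphi_{k+1}$ by $\varphi_{k+1}+\delta\varphi_0$ for a suitably small $\delta>0$ (chosen so that the correction term $S_\delta=\delta(\varphi_{k+1}\varphi_0^T+\varphi_0\varphi_{k+1}^T)+\delta^2\varphi_0\varphi_0^T$ lies outside $\linspan X_\Phi$), checks that this raises $\dim\linspan X_\Phi$ by exactly one, and iterates; this concrete construction is then reused almost verbatim in the proof of Theorem \ref{bdMlessd}, which is one reason the paper carries it out explicitly. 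You instead encode $\dim\linspan X_\Phi$ as the rank of the matrix whose columns are the vectorized $\varphi_i\varphi_i^T$, observe that the ``bad'' set is the common zero locus of all $D\times D$ minors (polynomials in the entries of $\Phi$), and exhibit the rank-one basis $\{e_ke_k^T\}\cup\{(e_i+e_j)(e_i+e_j)^T\}$ of $S_N$ to produce a point where some maximal minor is nonzero, so that the bad set sits inside the zero set of a nonzero polynomial and is therefore nowhere dense. This is essentially the argument the paper attributes to Cahill--Chen in the remark following the lemma (stated there only for $M\ge N(N+1)/2$), and you extend it cleanly to both cases $D=M$ and $D=N(N+1)/2$; it is shorter and gives openness of the good set for free, whereas the paper's constructive perturbation yields an explicit $\varepsilon$-close modification that it needs again later. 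Both arguments are complete; your exhibition of a rank-one basis of $S_N$ plays the same role as the paper's Lemma \ref{l:not_all} (both ultimately rest on the spectral theorem).
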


\begin{proof}
Let $\Phi = \{\varphi_i\}_{i=1}^M\in\R^{N\times M}$ and $\varepsilon > 0$. We will show that there exists $\Psi =
\{\psi_i\}_{i=1}^M\in\R^{N\times M}$ with $\|\Phi - \Psi\| < \varepsilon$ and $\dim\linspan X_{\Psi}  = D$. For this, set
$\mathcal{W} := \linspan X_{\Phi}$ and let $k$ be the dimension of $\mathcal{W}$. If $k=D$, nothing is to prove. Hence,
let $k < D$. Without loss of generality, assume that $\varphi_1\varphi_1^T,\ldots,\varphi_k\varphi_k^T$ are linearly
independent. By Lemma \ref{l:not_all} there exists $\varphi_0\in\R^N$ with $\|\varphi_0\| = 1$ such that
$\varphi_0\varphi_0^T\notin\mathcal{W}$. For $\delta > 0$ define the symmetric matrix
$$
S_\delta := \delta\left(\varphi_{k+1}\varphi_0^T + \varphi_0\varphi_{k+1}^T\right) + \delta^2\varphi_0\varphi_0^T.
$$
Then there exists {\em at most} one $\delta > 0$ such that $S_\delta\in\mathcal{W}$ (regardless of whether
$\varphi_{k+1}\varphi_0^T + \varphi_0\varphi_{k+1}^T$ and $\varphi_0\varphi_0^T$ are linearly independent or not).
Therefore, we find $\delta > 0$ such that $\delta < \varepsilon /M$ and $S_\delta\notin\mathcal{W}$. Now, for $i\in [M]$ put
$$
\psi_i :=
\begin{cases}
\varphi_i &\text{if }i\neq k+1\\
\varphi_{k+1} + \delta\varphi_0 &\text{if }i=k+1
\end{cases}
$$
and $\Psi := \{\psi_i\}_{i=1}^M$. Let $\lambda_1,\ldots,\lambda_{k+1}\in\R$ such that
$$
\sum_{i=1}^{k+1}\lambda_i\psi_i\psi_i^T = 0.
$$
Then, since $\psi_{k+1}\psi_{k+1}^T = \varphi_{k+1}\varphi_{k+1}^T + S_\delta$, we have that
$$
\lambda_{k+1}S_\delta = -\sum_{i=1}^{k+1}\lambda_i\varphi_i\varphi_i^T\in\mathcal{W},
$$
which implies $\lambda_{k+1} = 0$ and therefore also $\lambda_1 = \ldots = \lambda_k = 0$. Hence, we have
$\dim\linspan X_\Psi = k+1$ and $\|\Phi - \Psi\| < \varepsilon/M$. If $k = D-1$, we are finished. Otherwise, repeat
the above construction at most $D - k - 1$ times.
\end{proof}

\begin{rem}
For the case $M\ge N(N+1)/2$, Lemma {\rm\ref{l:dense}} has been proved in {\rm\cite[Theorem 2.1]{cc13}}. In the
proof, the authors note that $X_\Phi$ spans $S_N$ if and only if the frame operator of $X_\Phi$ {\rm (}considered
as a system in $S_N${\rm )} is invertible. But the determinant of this operator is a polynomial in the entries of
$\vphi_i$, and the complement of the set of roots of such polynomials is known to be dense.
\end{rem}

{\sc  Proof of Theorem }\ref{bdMlessd}.
Assume the contrary. Then, by Lemma \ref{l:dense}, there even exists an interior point $\Phi = \{\varphi_i\}_{i=1}^M\in\sca(M,N)$
of $\sca(M,N)$ for which the linear space
$
\mathcal{W} := \linspan X_{\Phi} $
has dimension $M$. Since $\Phi$ is scalable, there exist $c_1,\ldots,c_M\ge 0$ such that
$$
\sum_{i=1}^M c_i\varphi_i\varphi_i^T = \Id.
$$
Without loss of generality we may assume that $c_1 > 0$.

By Lemma \ref{l:not_all} there exists $\varphi_0\in\R^N$ with $\|\varphi_0\| = 1$ such that $\varphi_0\varphi_0^T\notin\mathcal{W}$.
As in the proof of Lemma \ref{l:dense}, we set
$$
S_\delta := \delta\left(\varphi_1\varphi_0^T + \varphi_0\varphi_1^T\right) + \delta^2\varphi_0\varphi_0^T.
$$
Then, for $\delta > 0$ sufficiently small, $S_\delta\notin\mathcal{W}$ and $\Psi := \{\varphi_1 + \delta\varphi_0,\varphi_2,
\ldots,\varphi_M\}\in\sca(M,N)$. Hence, there exist $c_1',\ldots,c_M'\geq 0$ such that
$$
\sum_{i=1}^M c_i\varphi_i\varphi_i^T = \Id = c_1'(\varphi_1 + \delta\varphi_0)(\varphi_1 + \delta\varphi_0)^T
+ \sum_{i=2}^Mc_i'\varphi_i\varphi_i^T = \sum_{i=1}^Mc_i'\varphi_i\varphi_i^T + c_1'S_\delta.
$$
This implies $c_1'S_\delta\in\mathcal{W}$, and thus $c_1' = 0$. But then we have
$$
c_1\varphi_1\varphi_1^T + \sum_{i=2}^M(c_i - c_i')\varphi_i\varphi_i^T = 0,
$$
which yields $c_1 = 0$ as the matrices $\varphi_1\varphi_1^T,\ldots,\varphi_M\varphi_M^T$ are linearly independent. A contradiction.\hfill\qed


\section*{ACKNOWLEDGMENTS}

G.~Kutyniok acknowledges support by the Einstein Foundation Berlin, by Deutsche Forschungsgemeinschaft
(DFG) Grant KU 1446/14, and by the DFG Research Center {\sc Matheon} ``Mathematics for key technologies''
in Berlin. F.~Philipp is supported by the DFG Research Center {\sc Matheon}. K.~A.~Okoudjou  was supported
by ONR grants N000140910324 and N0001\-40910144, by a RASA from the Graduate School of UMCP and by the
Alexander von Humboldt foundation. He would also like to express his gratitude to the Institute for
Mathematics at the Universit\"at Osnabr\"uck and the Institute of Mathematics at the Technische Universit\"at
Berlin for their hospitality while part of this work was completed.


\end{document}